\newtheorem{thm}{Theorem}[section]
\newtheorem{prop}{Proposition}[section]
\newtheorem{lem}{Lemma}[section]
\newtheorem{cor}{Corollary}[section]
\newtheorem{rem}{Remark}
\newtheorem{ex}{Example}
\begin{document}

\title{On the geometry of singular K3 surfaces with discriminant 3, 4 and 7}
\author{Taiki Takatsu}
\date{}
\keywords{K3 surfaces, singular K3 surfaces, Automorphism groups of K3 surfaces, Lattice theory}
\subjclass[2010]{Primary 14J28; Secondary 14J50, 14J27}
\begin{abstract}
We give construction of singular K3 surfaces with discriminant 3 and 4 as double coverings over the projective plane.
Focusing on the similarities in their branching loci, we can generalize this construction, and obtain a three dimensional
moduli space of certain K3 surfaces which admit infinite automorphism groups.
Moreover, we show that these K3 surfaces are characterized in terms of the configuration of the singular fibres and a global section,
and also in terms of periods.
\end{abstract}
\maketitle
\section{Introduction}
\indent
A compact complex surface $X$ is called a {\it{K$3$ surface}} if the canonical bundle is trivial and the irregularity is zero.
By Torelli theorem for K3 surfaces,
lattice theory can be applied to the classification of K3 surfaces, and also to the classification of their automorphism groups.
\\
\indent
For example, Shioda and Inose \cite{Shioda} showed that any singular K3 surface admits an infinite automorphism group.
Here, a K3 surface is said to be {\it{singular}} if its Picard number is $20$.
Moreover, they proved the following theorem:
\\[1ex]
{\textbf{Theorem}}
{\it{
There exists a bijective map from the set of equivalence classes of singular K$3$ surfaces
to the set of equivalence classes of positive definite oriented even lattices of rank $2$.}}
\\[1ex]
\indent
This map is given by $X \mapsto T_{X}$, where $T_{X}$ is the transcendental lattice of $X$.
So the transcendental lattices are important invariants for singular K3 surfaces.
In this way,
singular K3 surfaces have been studied in detail,
although there are, so far, only eleven known cases where the automorphism groups are actually calculated.
$T_{X}$ is denoted by
$$\left(
\begin{array}{cc}
a & b \\
b & c
\end{array}
\right).$$
The transcendental lattices of these K3 surfaces are given by the following table:
\begin{center}
  \begin{tabular}{r|r|r}
  No. & [a,b,c] &                              \\ [2pt]  \hline
   1  & [2,1,2] & Vinberg \cite{Vinberg} \\ [2pt] 
   2  & [2,0,2] & Vinberg \cite{Vinberg} \\ [2pt] \hline
   3  & [2,1,4] & Ujikawa \cite{Ujikawa} \\ [2pt] \hline
   4  & [2,0,4] & Shimada \cite{Shimada} \\ [2pt] 
   5  & [2,1,6] & Shimada \cite{Shimada} \\ [2pt] 
   6  & [2,0,6] & Shimada \cite{Shimada} \\ [2pt]  \hline
   7  & [4,2,4] & Keum and Kondo \cite{Kondo} \\ [2pt]  \hline
   8  & [2,1,8] & Shimada \cite{Shimada} \\ [2pt] 
   9  & [4,1,4] & Shimada \cite{Shimada} \\ [2pt] 
   10 & [2,0,8] & Shimada \cite{Shimada} \\ [2pt] \hline
   11 & [4,0,4] & Keum and Kondo \cite{Kondo}
  \end{tabular}
\end{center}
Vinberg \cite{Vinberg} calculated the automorphism groups of singular K3 surfaces for the cases of No.1 and 2.
Ujikawa \cite{Ujikawa} calculated the automorphism group of a singular K3 surface for the case of No.3.
Shimada \cite{Shimada} calculated the automorphism groups of singular K3 surfaces for the cases of No.4, 5, 6, 8, 9 and 10.
Keum and Kondo \cite{Kondo} calculated the automorphism groups of singular K3 surfaces for the cases of No.7 and 11.
For the cases of Keum and Kondo, these K3 surfaces are Kummer surfaces.
\\
\indent
In this paper, we focus on the singular K3 surfaces with discriminant $3$, $4$ and $7$.
This paper aims at finding common properties among these three K3 surfaces, which should give us clue to understand 
these surfaces more in depth. 
\\
\indent
In the case of discriminant 7,
Naruki \cite{Naruki} has already obtained some constructions of it.
One of the constructions is given by the double covering branched over a sextic curve.
In particular,
this double covering is obtained by the quotient of this K3 surface by the involution associated to an elliptic fibration.
Considering the action of the involution on the singular fibres,
he characterized the singularities of the sextic curve and obtained the equation explicitly.
In Section 5.2, we explain this construction.
In the case of discriminant 3 and 4,
we have similar constructions as double covering over the projective plane, which can be described explicitly,
and it turns out that these three sextic curves have certain common properties in their singularities.
For example, these three curves have only four singular points, and one of them is a $D_{4}$ singularity.
The lines passing through this point define an elliptic fibration on these K3 surfaces.
\\
\indent
Then we can generalize these constructions of the three singular K3 surfaces as the double coverings over
the projective plane.
Branching loci of the double coverings are sextic curves.
The sextic curves have only four singularities, and one of them is a $D_{4}$ singularity.
Hence the K3 surfaces obtained in this way have elliptic fibrations associated to the sextic curves,
and this elliptic fibrations induce the sublattices of their N\'{e}ron-Severi groups.
Therefore, by lattice theory,
we can also describe these K3 surfaces by means of a condition of their periods.
Then we obtain a three dimensional moduli space, denoted in Section 7 by
$\mathcal{B} \hspace{-,1em}\setminus \hspace{-,2em}\mathcal{H} .$
Moreover, applying the same argument as Naruki \cite{Naruki},
we show that the constructions of these K3 surfaces by the double coverings over the projective plane,  
the elliptic fibrations and the condition of their periods are equivalent to each other.
\\
\indent
The moduli space $\mathcal{B} \hspace{-,1em}\setminus \hspace{-,2em}\mathcal{H} $
is obtained from the constructions of K3 surfaces, which generalize the construction of 
the singular K3 surfaces with discriminant 3, 4 and 7,
and these three K3 surfaces admit infinite automorphism groups.
Therefore we expect that any K3 surface included in $ \mathcal{B} \hspace{-,1em}\setminus \hspace{-,2em}\mathcal{H}$
admits an infinite automorphism group.
To confirm this, we can apply the same argument as Shioda and Inose \cite{Shioda}.
On any singular K3 surface,
they constructed an elliptic fibration which has infinitely many sections.
These sections induce an infinite subgroup of their automorphism group.
In the case of K3 surface $X \in \mathcal{B} \hspace{-,1em}\setminus \hspace{-,2em}\mathcal{H},$
we can also construct such an elliptic fibration in the same way.
\\[2ex]
{\bf{Acknowledgments.}}
The author would like to thank my advisor Fumiharu Kato for many helpful conversations and advices.
He would also like to thank Professor Shigeyuki Kondo for very useful discussions.
\section{Lattices}
 A {\it{lattice}} ($L$,$\langle \:,\: \rangle$) is a free $\mathbb{Z}$-module of finite rank with a non-degenerate symmetric bilinear form $\langle \;,\; \rangle :L\times L \to \mathbb{Z}$.
The {\it{orthogonal group}} of $L$ is the one consisting of isometries of $L$, denoted by $O(L)$.
The {\it{dual lattice}} $Hom_{\mathbb{Z}}(L,\mathbb{Z})$ is denoted by $L^{*}$.
Since $\langle \;,\; \rangle$ is non-degenerate,
a map 
$$ L \to L^{*} :\: x \mapsto \langle x, \cdot \rangle $$
is injective.
Then we regard $L$ as a sublattice of $L^{*}$.
\\
For example, the {\it{hyperbolic lattice}} $U$ is the lattice of rank 2 with the Gram matrix
\\[1.5ex]
\indent
$\left(
\begin{array}{cc}
0 & 1 \\
1 & 0
\end{array}
\right) $.
\\[2ex]
Let $(L, \langle \;,\; \rangle)$ be a lattice.
A {\it{sublattice}} of $L$ is a subgroup $S$ of $L$ such that $(S, \langle \;,\; \rangle |_{S})$ is a lattice.
A sublattice $S$ of $L$ is said to be {\it{primitive}} if the quotient group $L/S$ has no torsion element.
The orthogonal complement of $S$ is denoted by $S^{\bot}$.
We denote by $S(\delta_{1},\delta_{2},... ,\delta_{k})$ the subgroup of $L$ generated by $S$ and $\delta_{i} \in L$ 
($i=1,2,...,k$).
\\
It is easy to show the following lemmata.
\begin{lem}\label{1}
Let $S$ be a sublattice of $L$,
Then the following conditions are equivalent.
\\
\indent
(i) $S$ is a primitive sublattice of $L$.
\\
\indent
(ii) There exist $\delta_{i} \in L$ $(i=1, 2,...\:k)$  such that $S(\delta_{1},\delta_{2}, ... ,\delta_{k})=L$.
\end{lem}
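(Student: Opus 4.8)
The plan is to derive both implications from the elementary divisor theorem applied to the inclusion $S \subseteq L$, using only that the abelian group underlying a lattice is free of finite rank. Put $n := \operatorname{rank} L$ and $m := \operatorname{rank} S$. First I would fix a $\mathbb{Z}$-basis $e_{1}, \dots, e_{n}$ of $L$ together with positive integers $d_{1} \mid d_{2} \mid \dots \mid d_{m}$ for which $d_{1} e_{1}, \dots, d_{m} e_{m}$ is a $\mathbb{Z}$-basis of $S$. From such a normal form one reads off an isomorphism of abelian groups
$$ L/S \;\cong\; (\mathbb{Z}/d_{1}\mathbb{Z}) \oplus \dots \oplus (\mathbb{Z}/d_{m}\mathbb{Z}) \oplus \mathbb{Z}^{\,n-m} , $$
so that $L/S$ has no torsion element precisely when $d_{1} = \dots = d_{m} = 1$. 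This single computation is the crux; both implications are then bookkeeping around it.

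For (i) $\Rightarrow$ (ii): if $S$ is primitive then $L/S$ is torsion free, so by the displayed isomorphism every $d_{i}$ equals $1$ and $e_{1}, \dots, e_{m}$ is itself a $\mathbb{Z}$-basis of $S$. Taking $k := n - m$ and $\delta_{i} := e_{m+i}$ for $i = 1, \dots, k$, the subgroup generated by $S$ and $\delta_{1}, \dots, \delta_{k}$ contains every $e_{j}$ and is therefore all of $L$; thus $S(\delta_{1}, \dots, \delta_{k}) = L$, and in fact $L = S \oplus \mathbb{Z}\delta_{1} \oplus \dots \oplus \mathbb{Z}\delta_{k}$. (Equivalently, one can argue that $L/S$ is finitely generated and torsion free, hence free, so the surjection $L \to L/S$ splits and any lift of a basis of $L/S$ furnishes the $\delta_{i}$.)

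For (ii) $\Rightarrow$ (i), the point that needs care is the precise meaning of (ii): since $S$ together with any $\mathbb{Z}$-basis of $L$ already generates $L$, condition (ii) must be read with a constraint --- e.g.\ that $L = S \oplus (\mathbb{Z}\delta_{1} \oplus \dots \oplus \mathbb{Z}\delta_{k})$, the $\delta_{i}$ spanning a complement of $S$, or that $k = \operatorname{rank} L - \operatorname{rank} S$ --- and I would make this explicit in the statement. Under the first reading, if $x \in L$ and $r x \in S$ for some nonzero integer $r$, then writing $x = s + \sum_{i} a_{i}\delta_{i}$ with $s \in S$ gives $\sum_{i} (r a_{i})\delta_{i} = r x - r s \in S \cap (\mathbb{Z}\delta_{1} \oplus \dots \oplus \mathbb{Z}\delta_{k}) = 0$, so $r a_{i} = 0$ and hence $a_{i} = 0$ for all $i$, whence $x = s \in S$; thus $L/S$ is torsion free and $S$ is primitive. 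Apart from pinning down the statement of (ii), I anticipate no obstacle: once the Smith normal form of the first step is in hand, the lemma is immediate.
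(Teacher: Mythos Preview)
The paper states this lemma without proof (it is prefaced only by ``It is easy to show the following lemmata''), so there is no argument in the paper to compare against. Your approach via the elementary divisor theorem is the natural one, and your treatment of (i)~$\Rightarrow$~(ii) is correct: primitivity of $S$ forces all the elementary divisors $d_i$ to equal $1$, so a $\mathbb{Z}$-basis of $S$ extends to a $\mathbb{Z}$-basis of $L$, and the extra basis vectors serve as the required $\delta_i$.

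You are also right to flag the converse. As literally written, condition (ii) is satisfied by \emph{every} sublattice, since one may take the $\delta_i$ to be any $\mathbb{Z}$-basis of $L$; so (ii)~$\Rightarrow$~(i) is false without an additional constraint. The intended reading is presumably that $S$ is a direct summand of $L$ as a $\mathbb{Z}$-module, or equivalently that one may take $k=\operatorname{rank}L-\operatorname{rank}S$ (in which case the $m+k=n$ generators of $L$ consisting of a basis of $S$ together with the $\delta_i$ must already be a basis, so $S$ is a summand). Under either formulation your torsion argument is correct. Making this constraint explicit, as you propose, is the right repair; the lemma as printed is imprecise.
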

Let $\{e_{i}\}$ ($i=1,2,...r$) be a basis of a lattice $L$.
We denote by $d(L)$ the absolute value of determinant of a matrix $(\langle e_{i},e_{j} \rangle)_{i,j}$.
This is well-defined.
\begin{lem}\label{2}
Let $L$ and $S$ be lattices.
Assume that $L$ has a sublattice of finite index isomorphic to $S$.
Then $d(L)$ is given by the formula:
$$ d(S)=d(L)\times |L/S|^{2}.$$
\end{lem}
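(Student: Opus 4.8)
The plan is to reduce the statement to the elementary-divisor (Smith normal form) description of a finite-index subgroup. Let $S \subseteq L$ be a sublattice of finite index with $|L/S| = m$, and fix a $\mathbb{Z}$-basis $e_1, \dots, e_r$ of $L$. By the structure theorem for finitely generated abelian groups, or equivalently by putting the inclusion matrix into Smith normal form, there is a basis $f_1, \dots, f_r$ of $L$ and positive integers $d_1 \mid d_2 \mid \cdots \mid d_r$ such that $d_1 f_1, \dots, d_r f_r$ is a basis of $S$, and $|L/S| = d_1 d_2 \cdots d_r = m$. First I would record this, and let $A$ denote the change-of-basis matrix expressing $f_1, \dots, f_r$ in terms of $e_1, \dots, e_r$, so $A \in GL_r(\mathbb{Z})$ and $\det A = \pm 1$.

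Next I would compute both determinants in the basis $f_1, \dots, f_r$. Writing $G = (\langle f_i, f_j\rangle)_{i,j}$ for the Gram matrix of $L$ in this basis, we have $d(L) = |\det G|$ since $A$ is unimodular. The Gram matrix of $S$ in its basis $d_1 f_1, \dots, d_r f_r$ is $D G D$ where $D = \mathrm{diag}(d_1, \dots, d_r)$, because $\langle d_i f_i, d_j f_j \rangle = d_i d_j \langle f_i, f_j\rangle$. Hence
$$ d(S) = |\det(D G D)| = (\det D)^2 \, |\det G| = (d_1 \cdots d_r)^2 \, d(L) = m^2 \, d(L),$$
which is exactly the claimed formula $d(S) = d(L) \times |L/S|^2$. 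Finally I would note that $S$ being isomorphic as a lattice to the given lattice "$S$" in the statement means $d(S)$ is the same for either, so the formula is unambiguous; and one should observe that non-degeneracy of $L$ forces $\det G \neq 0$, so the index is automatically finite precisely when $S$ has full rank $r$, consistent with the hypothesis.

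There is no real obstacle here; the only point requiring a little care is the invocation of Smith normal form to get a \emph{simultaneous} adapted basis, and the remark that $d(L)$ computed in the basis $\{f_i\}$ agrees with $d(L)$ computed in the basis $\{e_i\}$ — which is precisely the well-definedness asserted just before the lemma, coming from $\det A = \pm 1$. Everything else is the multiplicativity of the determinant applied to the diagonal matrix $D$.
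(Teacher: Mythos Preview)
Your proof is correct. The paper does not actually supply a proof of this lemma: it is preceded by the sentence ``It is easy to show the following lemmata'' and left without argument. Your Smith-normal-form argument is the standard one and is entirely adequate; there is nothing further to compare.
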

\indent A lattice $L$ is called an {\it{even lattice}} if $\langle x,x\rangle $ is even for any element $x$ of $L$.
For an even lattice $L$, a quotient group $L^{*}/L$ is denoted by $A_{L}$. The {\it{discriminant quadratic form}} of $L$ is defined to be
$$q_{L}: A_{L} \to \mathbb{Q}/2\mathbb{Z}\;,\; q_{L}(x+L):=\langle x,x\rangle \: mod \:2\mathbb{Z}.$$
The group $O(q_{L})$ is the one consisting of automorphisms of $A_{L}$ that preserves $q_{L}$.
\\[1ex]
In lattice theory, the following lemma is well-known (\cite{Nikulin}).
\begin{lem}\label{3}
Let $T$ be an indefinite even lattice with the signature $(t_{+},t_{-})$, and let $q_{T}$ be the discriminant quadratic form of $T$.
Assume that $rank(T)\geq l(A_{T})+2$, where $l(A_{T})$ is the length of $A_{T}$.
Then any even lattices $T^{'}$ with the signature $(t_{+}, t_{-})$ and $q_{T^{'}}=q_{T}$ is isomorphic to $T$.
Moreover, a canonical map $O(T)\to O(q_{T})$ is surjective.
\end{lem}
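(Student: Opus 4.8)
\emph{Proof proposal.} The plan is to deduce both assertions — that the genus of $T$ is a single isometry class, and that $O(T)\to O(q_T)$ is onto — from the theory of primitive embeddings into even unimodular lattices, following Nikulin. The two statements are entangled and will have to be handled together.

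The first step is a doubling trick producing a unimodular overlattice. Given any candidate $T'$ with $\mathrm{sign}(T')=(t_+,t_-)$ and $q_{T'}=q_T=:q$, set $M=T\oplus T'(-1)$; then $q_M=q\oplus(-q)$ on $A\oplus A$ with $A=A_T$, and the diagonal $\Delta=\{(x,x):x\in A\}$ is totally isotropic with $|\Delta|^2=|A_M|$. By the standard correspondence between even overlattices of $M$ and isotropic subgroups of $(A_M,q_M)$, $\Delta$ determines an even overlattice $L\supset M$ which is unimodular; since $\Delta$ is the graph of $\mathrm{id}_A$, the sublattice $T$ remains primitive in $L$ with $T^{\perp}_L=T'(-1)$. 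As $L$ is indefinite even unimodular of signature $(n,n)$ with $n=\mathrm{rank}\,T$, we have $L\cong U^{\oplus n}$ by the classification of indefinite even unimodular lattices. Applying this also with $T'=T$, we see that both $T'(-1)$ and $T(-1)$ arise as orthogonal complements of primitive embeddings of $T$ into $U^{\oplus n}$.

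Next I would reduce everything to uniqueness of primitive embeddings. If any two primitive embeddings of $T$ into $L=U^{\oplus n}$ are $O(L)$-conjugate, their orthogonal complements are isometric, so $T'(-1)\cong T(-1)$ and hence $T'\cong T$; this gives the uniqueness claim. For surjectivity of $O(T)\to O(q_T)$, given $\phi\in O(q_T)$ one modifies the gluing isometry $\gamma_0\colon A_T\xrightarrow{\sim}A_{T(-1)}$ of a fixed primitive embedding to $\gamma_0\circ\phi^{-1}$, which is again admissible since $q_{T(-1)}\circ(\gamma_0\phi^{-1})=(-q_T)\phi^{-1}=-q_T$; this yields a second primitive embedding of $T$ with the same complement, and conjugating it back to the first and restricting to $T$ produces an isometry realizing $\phi$, provided the uniqueness statement is refined to keep track of the induced action on the discriminant groups.

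Thus the whole thing comes down to: a primitive embedding of $T$ into an indefinite even unimodular $L$ with $\mathrm{rank}(L)-\mathrm{rank}(T)=\mathrm{rank}(T)\geq l(A_T)+2$ is unique up to $O(L)$. Such embeddings with a fixed complement $K$ (of rank $n\geq l(A_K)+2$, indefinite) are parametrised by gluing isometries $A_T\xrightarrow{\sim}A_K$ modulo $O(T)\times O(K)$, and since these isometries form a torsor under $O(q_K)$, transitivity would be immediate from surjectivity of $O(K)\to O(q_K)$ — which is the very type of statement we are proving. This circularity is the main obstacle. The standard way around it is induction on the rank: first treat lattices splitting off a hyperbolic plane, $T\cong U\oplus T_0$, where $O(T)\to O(q_T)$ is surjective because the subgroup generated by Eichler transvections (available thanks to the isotropic vectors of $U$) already surjects onto $O(q_T)$; then, using that $\mathrm{rank}(T)\geq l(A_T)+2$ leaves enough room to compare $T$ and the complements appearing above with $U$-split lattices of larger rank, one bootstraps uniqueness and surjectivity simultaneously to the general indefinite case. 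The Eichler-transvection step and this inductive comparison of orthogonal groups are the technical heart; everything else is routine lattice bookkeeping. As this is exactly Nikulin's Theorem, in the paper we simply cite \cite{Nikulin}.
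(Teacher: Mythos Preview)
The paper gives no proof of this lemma at all: it is introduced with ``the following lemma is well-known (\cite{Nikulin})'' and then simply stated. Your final sentence --- that one just cites \cite{Nikulin} --- is therefore exactly what the paper does, and your sketch of Nikulin's argument (doubling into $U^{\oplus n}$, reduction to uniqueness of primitive embeddings, Eichler transvections for the $U$-split case) is a reasonable outline of the standard proof but is not needed here.
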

\indent A element $\delta$ of $L$ is called  a {\it{$(-2)$-root}} if $\delta^{2}=-2$.
A lattice is called a {\it{root lattice}} if it is a negative-definite even lattice and generated by $(-2)$-roots.
Any root lattice is isomorphic to a sum of irreducible root lattices of types $A_{l}$, $D_{m}$ and $E_{n}$
$(l\geq 1,\: m\geq 4,\: n=6,7,8)$.
\\
For a $(-2)$-root $\delta$, the map $s_{\delta}:L \to L$ is defined to be
$$s_{\delta}(x)=x+ \langle x,\delta \rangle \delta \,\,\,\, \mbox{for} \; x\in L.$$
By an easy calculation, it preserves the bilinear form of $L$, and satisfies $$s_{\delta}^{2}=1.$$
Hence this is an element of $O(L)$ and a reflection with respect to the hyperplane $r^{\bot} \in L \otimes \mathbb{R}$.
Let  $W(L)$ the subgroup of $O(L)$ generated by all reflections.
\\[1ex]
Let $L$ be a unimodular even lattice,
and $S$ be a primitive sublattice of $L$.
The orthogonal complement of $S$, denoted by $T$,
is also a primitive sublattice of $L$.
Let $H$ be a quotient group $L/S \oplus T$.
We denote by $p_{S}$ (resp. $p_{T}$) the projection map from $A_{S}\oplus A_{T}$ to $A_{S}$ (resp. $A_{T}$).
Then $p_{S}$ and $p_{T}$ satisfy the following Lemma.
\begin{lem}\label{4}
$p_{S}|_{H}:H \to A_{S}$ and $p_{T}|_{H}:H\to A_{T}$ are bijective maps.
\end{lem}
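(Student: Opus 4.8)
The plan is to realize $H$ as a subgroup of $A_{S}\oplus A_{T}$ on which $p_{S}$ and $p_{T}$ act by coordinate projection, prove that each projection is injective on $H$ using primitivity, and then force bijectivity by a cardinality count built from Lemma \ref{2}. First I would record the chain of inclusions $S\oplus T\subseteq L=L^{*}\subseteq(S\oplus T)^{*}=S^{*}\oplus T^{*}$, where the middle equality is exactly unimodularity of $L$ and the last inclusion is dual to $S\oplus T\subseteq L$. Consequently the quotient $H=L/(S\oplus T)$ sits naturally inside $(S^{*}\oplus T^{*})/(S\oplus T)=A_{S}\oplus A_{T}$, and $p_{S}|_{H}$, $p_{T}|_{H}$ are the restrictions to $H$ of the two coordinate projections $A_{S}\oplus A_{T}\to A_{S}$ and $\to A_{T}$.

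Next I would establish injectivity. Suppose $x\in L$ maps to $0$ under $p_{S}|_{H}$; writing $x=s^{*}+t^{*}$ with $s^{*}\in S^{*}$ and $t^{*}\in T^{*}$, this means $s^{*}\in S$. Then $t^{*}=x-s^{*}\in L$, and since $t^{*}\in T\otimes\mathbb{Q}$ while $T$ is primitive in $L$ (so $L\cap(T\otimes\mathbb{Q})=T$), we get $t^{*}\in T$; likewise $s^{*}=x-t^{*}\in L\cap(S\otimes\mathbb{Q})=S$ by primitivity of $S$. Hence $x\in S\oplus T$, i.e. its class in $H$ vanishes, so $p_{S}|_{H}$ is injective. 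The identical argument, with the roles of $S$ and $T$ exchanged, gives injectivity of $p_{T}|_{H}$.

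The remaining point, which I expect to be the only substantive one, is to turn injectivity into bijectivity by counting. Since $S$ is a (non-degenerate) lattice, $L\otimes\mathbb{Q}=(S\otimes\mathbb{Q})\oplus(T\otimes\mathbb{Q})$, so $T$ is non-degenerate and $S\oplus T$ has finite index in $L$; applying Lemma \ref{2} to the finite-index sublattice $S\oplus T$ of $L$ and using $d(L)=1$ yields $d(S)\,d(T)=d(S\oplus T)=|H|^{2}$. On the other hand $|A_{S}|=d(S)$ and $|A_{T}|=d(T)$, while injectivity gives $|H|\le|A_{S}|=d(S)$ and $|H|\le|A_{T}|=d(T)$. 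Multiplying these, $|H|^{2}\le d(S)\,d(T)=|H|^{2}$, so both inequalities are equalities and $|H|=|A_{S}|=|A_{T}|$. An injective map between finite sets of equal cardinality is a bijection, so $p_{S}|_{H}$ and $p_{T}|_{H}$ are both bijective, as claimed.
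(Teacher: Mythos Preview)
Your proof is correct and follows essentially the same approach as the paper: establish injectivity of each projection via primitivity of $S$ and $T$, then upgrade to bijectivity by the cardinality identity $|A_{S}|\cdot|A_{T}|=|H|^{2}$. The paper simply asserts this identity at the outset, whereas you spell out its derivation from Lemma~\ref{2} and the unimodularity of $L$; the extra care is harmless and the two arguments are otherwise the same.
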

\begin{proof}
It is enough to show the injectivity of $p_{S}$ and $p_{T}$ because $|A_{S}|\cdot |A_{T}|=|H|^{2}$.
Let
$(x$ mod $S$, $y$ mod $T)\in H$
be an element of the kernel of $p_{s}|H$,
where $x\in S^{*}$, $y\in T^{*}$.
Since $x+y \in L$, 
$x \in S\subset L$,
$y \in L\cap T^{*}$ and $T$ is primitive, $y\in T$.
Hence, $(x$ mod $S$, $y$ mod $T)=0$.
In the same way, $p_{T}|H$ is injective.
\end{proof}
\noindent
Hence the map
$$ r_{S,T}=p_{T}\circ (p_{S}|H)^{-1}:A_{S} \to A_{T}$$
is an isomorphism of groups.
\section{K3 surfaces, elliptic fibrations and period map}
%
\subsection{K3 surfaces}
For a K3 surface $X$, the second cohomology group $H^{2}(X,\mathbb{Z})$ with the cup product is a unimodular even lattice with 
the signature $(3,19)$. Then $H^{2}(X,\mathbb{Z})$ is isomorphic to $U^{\oplus 3}\oplus E_{8}^{\oplus 2}$.
Let $\omega_{X}$ be a nowhere vanishing holomorphic $2$-form of $X$.
The {\it{N\'{e}ron-Severi lattice}} $NS(X)$ is defined by
$$NS(X):=\{ x\in H^{2}(X,\mathbb{Z}) \hspace{+,3em}| \hspace{+,3em}\langle x,\omega_{X} \rangle =0\}
=H^{2}(X,\mathbb{Z})\cap H^{1,1}(X,\mathbb{R}).$$
The N\'{e}ron-Severi lattice $NS(X)$ is a lattice by the cup product, which is also denoted by $S_{X}$.
The rank of $NS(X)$ is called the {\it{Picard number}} of $X$, denoted by $\rho(X)$.
The {\it{transcendental lattice}} $T_{X}$ of $X$ is defined by $NS(X)^{\bot}$.
\\[1ex]
By Riemann-Roch theorem and Serre duality, we get the following lemma.
\begin{lem}\label{5}
Let $X$ be a K$3$ surface, and let $\delta$ be an element of $S_{X}$ with $\delta^{2} \geq -2$.
Then $\delta$ or $-\delta$ is represented by an effective divisor.
\end{lem}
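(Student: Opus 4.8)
The plan is to apply the Riemann--Roch theorem to the divisor class $\delta$ on the K3 surface $X$. For any divisor $D$ on a K3 surface, since the canonical class $K_X$ is trivial and the arithmetic genus is $\chi(\mathcal{O}_X)=2$, Riemann--Roch reads
$$
h^0(D) - h^1(D) + h^2(D) = \frac{D^2}{2} + 2.
$$
First I would use Serre duality, which on a K3 surface gives $h^2(D) = h^0(K_X - D) = h^0(-D)$. Hence
$$
h^0(D) + h^0(-D) \geq h^1(D) + \frac{D^2}{2} + 2 \geq \frac{D^2}{2} + 2.
$$

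Now apply this with $D = \delta$. Since $\delta^2 \geq -2$, we get $h^0(\delta) + h^0(-\delta) \geq \frac{\delta^2}{2} + 2 \geq 1$. Therefore at least one of $h^0(\delta)$, $h^0(-\delta)$ is strictly positive, which means that $\delta$ or $-\delta$ is linearly equivalent to an effective divisor. That is exactly the assertion of the lemma.

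The argument is essentially a two-line computation once the right formulation of Riemann--Roch and Serre duality is in place, so there is no genuine obstacle; the only point requiring a word of care is the passage from $\delta \in S_X$ (a cohomology class) to an actual divisor class, which is legitimate because on a K3 surface the first Chern class map identifies $S_X = \mathrm{NS}(X)$ with the group of divisor classes modulo algebraic (equivalently, numerical) equivalence, and line bundles are determined by their first Chern class since $H^1(X,\mathcal{O}_X)=0$. I would state this identification briefly and then conclude. One should also note that the dichotomy is genuinely ``or'' and not exclusive only in the degenerate case $\delta = 0$ (excluded here since $\delta^2 \geq -2$ does not force $\delta \neq 0$, but if $\delta = 0$ the statement is trivial), so the proof as written covers all cases.
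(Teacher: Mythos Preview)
Your proof is correct and follows essentially the same route as the paper: apply Riemann--Roch on a K3 surface together with Serre duality (using $K_X=0$) to obtain $h^0(\delta)+h^0(-\delta)\geq \delta^2/2+2\geq 1$, and conclude. The paper's argument is the same two-line computation, without your additional remarks on the identification $S_X=\mathrm{NS}(X)$ and the trivial case $\delta=0$.
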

\begin{proof}
Let $L$ be a line bundle on $X$ which represents $\delta \in S_{X}$.
By Riemann-Roch theorem and Serre duality, we have
\begin{equation}
   \begin{split}
&dim H^{0}(X,\mathcal{O}(L)) + dim H^{2}(X,\mathcal{O}(L))\\
&=dim H^{0}(X,\mathcal{O}(L)) + dim H^{0}(X,\mathcal{O}(-L))\\
&\geq 2+\delta^{2}/2 \\
&\geq 1.
   \end{split}
\end{equation}
Hence we have $dimH^{0}(X, \mathcal{O}(L))>0$ or $dimH^{0}(X,\mathcal{O}(-L))>0$,
and $\delta$ or $-\delta$ is represented by an effective divisor.
\end{proof}
Let $\Delta (X)$ be the set
$$\Delta (X)=\{ \delta \in S_{X} \: | \: \delta^{2}=-2 \: \} ,$$
and let $\Delta (X)^{+}$ be the subset of $\Delta (X)$
$$\Delta (X)^{+}:=\{ \delta \in \Delta (X)\: | \: \delta \; \mbox{is represented by an effective divisor} \: \} .$$
We denote by $P^{+}(X)$ the irreducible component of the set 
$$P(X)=\{ x \in H^{1,1}(X,\: \mathbb{R})\: |\: x^{2} >0\} $$
which contains a K\"{a}hler class, and this is called the {\it{positive cone}}.
The group $W(X)$ acts on the space $P^{+}(X)$ in a canonical way.
It is known that one of the fundamental domains is given by
$$D(X)=\{ x \in P^{+}(X) \: |\: \langle x,\delta \rangle >0,\: \forall \delta \in \Delta (X)^{+} \}.$$
This is called the {\it{K\"{a}hler cone}}.
\\[1ex]
For a K3 surface $X$, the following theorem is well-known (\cite{BR, PS}).
\begin{thm}$($The Torelli theorem for K$3$ surfaces$)$
\\
Let $X$, $X^{'}$ be K$3$ surfaces, and let $\omega_{X}$ (resp.$\omega_{X^{'}}$) be a nowhere vanishing holomorphic $2$-form on $X$
(resp.$X^{'}$).
Assume that the isomorphism $\phi :H^{2}(X,\mathbb{Z}) \to H^{2}(X^{'},\mathbb{Z})$ satisfies the following conditions:
$$(a)\; \phi(\omega_{X}) \in \mathbb{C} \omega_{X^{'}},$$
$$(b)\: \phi(D(X))=D(X^{'}).$$
Then there exists uniquely an isomorphism $\psi :X{'} \to X$ such that $\psi^{*}=\phi$.
\end{thm}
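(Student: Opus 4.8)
The plan is to obtain this statement — the \emph{strong} Torelli theorem — along the classical lines of Piatetski-Shapiro and Shafarevich, extended to the (possibly non-algebraic) K\"{a}hler case by Burns and Rapoport. The uniqueness of $\psi$ is elementary and I would settle it first. For existence I would assemble three ingredients: local Torelli together with surjectivity of the period map; the theorem for Kummer surfaces, deduced from Torelli for complex tori; and a deformation argument reducing an arbitrary datum $(X,X^{'},\phi)$ to the Kummer case, along which conditions $(a)$ and $(b)$ on $\phi$ are carried.

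\emph{Uniqueness.} If $\psi_{1},\psi_{2}:X^{'}\to X$ both satisfy $\psi_{i}^{*}=\phi$, then $\sigma=\psi_{1}^{-1}\circ\psi_{2}$ is an automorphism of $X^{'}$ acting trivially on $H^{2}(X^{'},\mathbb{Z})$, hence fixing every K\"{a}hler class; since $\mathrm{Aut}(X^{'})$ is discrete ($H^{0}(X^{'},T_{X^{'}})=H^{0}(X^{'},\Omega^{1}_{X^{'}})=0$) and the stabilizer of a K\"{a}hler class lies in a compact isometry group, $\sigma$ has finite order. The holomorphic Lefschetz fixed-point formula applied to $\sigma$, using $\sigma^{*}\omega_{X^{'}}=\omega_{X^{'}}$, then forces $\sigma=\mathrm{id}$, so $\psi_{1}=\psi_{2}$.

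\emph{Period map and the Kummer case.} Local Torelli holds because deformations of a K3 surface are unobstructed and the differential of the period map into $\Omega=\{[\omega]\in\mathbb{P}(H^{2}(X,\mathbb{C})):\langle\omega,\omega\rangle=0,\ \langle\omega,\bar{\omega}\rangle>0\}$ is the canonical isomorphism $H^{1}(X,T_{X})\cong H^{1}(X,\Omega^{1}_{X})\xrightarrow{\sim}T_{[\omega_{X}]}\Omega$; the period map is moreover surjective (Todorov, Looijenga; in the non-algebraic case this uses Siu's theorem that every K3 surface is K\"{a}hler). For the base case, let $X=\mathrm{Km}(A)$ and $X^{'}=\mathrm{Km}(A^{'})$: the Kummer lattice spanned by the $16$ exceptional $(-2)$-curves is characterized lattice-theoretically inside $NS$, and together with the double-cover data recovers $A$ from $\mathrm{Km}(A)$; since the degree-two rational quotient identifies the transcendental Hodge structure of $\mathrm{Km}(A)$ with that of $A$ (with the bilinear form multiplied by $2$), Torelli for complex $2$-tori produces an isomorphism $A\cong A^{'}$, and one checks the induced isomorphism $\mathrm{Km}(A)\to\mathrm{Km}(A^{'})$ realizes $\phi$.

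\emph{Deformation argument and the main obstacle.} For general $(X,X^{'},\phi)$, fix a marking $m:H^{2}(X,\mathbb{Z})\cong U^{\oplus3}\oplus E_{8}^{\oplus2}$ and equip $X^{'}$ with the marking $m\circ\phi^{-1}$; then $X$ and $X^{'}$ have the same period point in $\Omega$ and the same marked K\"{a}hler cone, and producing $\psi$ with $\psi^{*}=\phi$ is the same as producing an isomorphism of these two marked surfaces. Using that the periods of marked Kummer surfaces are dense in $\Omega$, one joins the common period point to a Kummer period by a path, lifts it to paths in marked Kuranishi families over $X$ and over $X^{'}$ (by local Torelli and surjectivity), applies the Kummer case at the endpoint, and propagates the isomorphism back. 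The difficulty — where I expect essentially all the work to be — is that the moduli of marked K3 surfaces is not Hausdorff: one needs the separatedness statement that two marked K3 surfaces with the same period which fail to be separated differ by reflections in classes of $(-2)$-curves, and that condition $(b)$ excludes this; controlling this, along with the reliance on surjectivity of the period map in the non-projective setting, is the technical heart of Burns--Rapoport. The lattice-theoretic bookkeeping, including Lemmas \ref{1}--\ref{4} and the rigidity statement of Lemma \ref{3}, is by comparison routine. This is why the theorem is only quoted here, with references \cite{BR} and \cite{PS}.
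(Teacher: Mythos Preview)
Your proposal is accurate, but there is nothing to compare: the paper does not prove this theorem. It is stated as a well-known result with the citation ``(\cite{BR, PS})'' and no proof is given. You have correctly recognized this yourself in your final sentence, and the sketch you provide is a faithful outline of the Piatetski-Shapiro--Shafarevich / Burns--Rapoport approach that those references contain.
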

%
\subsection{Elliptic fibration}
An {\it{elliptic K$3$ surface}} is the pair of a K3 surface $X$ and an elliptic fibration $f: X \to \mathbb{P}^{1}$.
Let $f:X \to \mathbb{P}^{1}$ be an elliptic K3 surface with a global section $g$.
We regard each smooth fibre as an elliptic curve with the zero element being the intersection with $g$.
In particular, we have a rational selfmap of $X$ given by inversion on each fiber.
By the minimality of a K3 surface, this rational map extends to  an automorphism of $X$.
This automorphism is called the {\it{inversion involution}}.
\\[2ex]
Let $F_{v}=\sum_{i=0}^{m_{v}} C_{v}^{i}$ $(1\leq v \leq k)$ be singular fibres of $f$,
where $C_{v}^{i}$ is an irreducible component of $F_{v}$.
We assume that a global section $g$ intersects the rational curve $C_{v}^{0}$.
By Kodaira's classification of singular fibres,
a lattice generated by $C_{v}^{i}$ $(1 \leq i \leq m_{v})$ is isomorphic to 
the irreducible root lattice $L_{v}$.
We denoted by $E$ a line bundle represented by a general fibre of $f$,
and by $F$ a line bundle $E+ [g]$, where $[g]$ is a line bundle represented by $g$.
Since $E^{2}=F^{2}=0$ and $E\cdot F=1$,
a lattice generated by $E$ and $F$ is isomorphic to the hyperbolic lattice $U$.
Since $C_{v}^{i}\cdot E=C_{v}^{i}\cdot F=C_{\lambda}^{i}\cdot C_{\mu}^{j}=0$ $(\lambda \neq \mu)$,
these lattices are orthogonal to each other.
Hence $S_{X}$ has a sublattice isomorphic to $U \oplus(\oplus_{v=1}^{k} L_{v})$.
We call this sublattice the {\it{trivial sublattice}}, denoted by $L_{f}$.
This means that $L_{f}$ is determined by the types of singular fibres of $f$.
\subsection{Period map}
Let $L$ be a unimodular even lattice with the signature $(3,19)$.
The {\it{period domain}} $\Omega$ is defined by
$$\Omega=\{\omega \in L \otimes \mathbb{C}\:|\: 
\langle \omega, \omega \rangle =0, \: \langle \omega, \overline{\omega} \rangle >0\} .$$
Let $X$ be a K3 surface, and let $\alpha_{X}: H^{2}(X,\mathbb{Z})\to L$ be an isometry of lattices.
The pair ($X,\alpha_{X}$) is called a {\it{marked K$3$ surface}}.
By Riemann condition, we have $\alpha_{X}(\omega_{X})\in \Omega$.
\\[1ex]
Let $\mathcal{M}$ be the set of equivelence classes of marked K3 surfaces.
The period map of marked K3 surfaces is defined to be:
$$\lambda:\mathcal{M} \to \Omega ,\;  (X,\alpha_{X}) \to \alpha(\omega_{X}).$$
It is known that this map is surjective (\cite{T}).
\section{The result of Shioda and Inose}
Let $\Phi:X \to \mathbb{P}^{1}$ be an elliptic K3 surface,
and let $T$ be the trivial sublattice.
The sections of $\Phi$ generate a group, called the {\it{Mordell-Weil group}} $MW(\Phi)$.
The Mordell-Weil group $MW(\Phi)$ is a finitely generated group, and satisfies the following isomorphism (\cite{Shioda2}):
\begin{equation}\label{eq:MW}
MW(\Phi) \cong NS(X)/T
\end{equation}
We denote by $r(\Phi)$ the rank of $MW(\Phi)$.
When $r(\Phi)=0$, let $n(\Phi)$ denote the order of $MW(\Phi)$.
\\[2ex]
We will use the following theorem and lemmata by Shioda and Inose \cite{Shioda}.
%
%
\begin{lem}\label{7}$($Shioda and Inose $[4]$, lemma $1.1$$)$
\\
Assume that an effective divisor $D$ on a K$3$ surface $X$ has the same type as a simple singular fibre of an elliptic surface in the sense of Kodaira $[3]$  \S $6$.
Then there is a unique elliptic pencil $\Phi:X\to \mathbb{P}^{1}$ of which $D$ is a singular fibre. Moreover, any irreducible curve $C$ on X with
 $(CD)=1$ defines a (holomorphic) section of $\Phi$.
\end{lem}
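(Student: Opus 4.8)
The plan is to use the Torelli-type description of elliptic fibrations on a K3 surface via the Néron–Severi lattice together with Riemann–Roch, exactly in the spirit of Lemma~\ref{5} above. First I would argue that an effective divisor $D$ of the same combinatorial type as one of Kodaira's simple singular fibres (that is, types $I_n$, $II$, $III$, $IV$, or the relevant additive ones that occur as \emph{simple} fibres, so that the class of $D$ is primitive and $D^2=0$) spans, together with its components, a negative-semidefinite configuration whose radical is generated by the fibre class $[D]$ itself; in particular $[D]^2=0$ and $[D]$ is nef on the face of the positive cone it touches. Then by Riemann–Roch and Serre duality on the K3 surface $X$ (as in the displayed computation in the proof of Lemma~\ref{5}, now with $\delta^2=0$), the class $[D]$ has $\dim H^0(X,\mathcal{O}(D))\geq 2$, so the linear system $|D|$ is at least a pencil. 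One checks that $|D|$ has no fixed component: any fixed component would have to be one of the irreducible components $C_v^i$ of $D$, but removing it would leave a class with negative self-intersection or would contradict the semidefiniteness just established, so $|D|$ is base-point free.

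Next I would show the moving part of $|D|$ is a genuine elliptic pencil $\Phi:X\to\mathbb{P}^1$. Since $[D]^2=0$ and $|D|$ is base-point free, the morphism defined by $|D|$ (after Stein factorization, which is trivial here because $\dim H^0=2$ forces the image to be $\mathbb{P}^1$) has fibres $F$ with $F^2=0$; by adjunction on the K3 surface, $p_a(F)=1$, so the general fibre is a smooth curve of genus one and $\Phi$ is an elliptic fibration. By construction $D\in|D|$ is one of its fibres, and because $D$ already had the shape of a Kodaira singular fibre (a cycle or chain of $(-2)$-curves, or a single nodal/cuspidal rational curve), $D$ is visibly a \emph{singular} fibre of $\Phi$ of precisely that Kodaira type. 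Uniqueness of $\Phi$ is immediate: any elliptic pencil having $D$ among its fibres must be (a subsystem of) $|D|$, and $\dim|D|=1$ leaves no room for a proper subsystem, so the pencil is forced.

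For the last assertion, let $C$ be an irreducible curve on $X$ with $C\cdot D=1$. Since $D$ is a fibre of $\Phi$, we have $C\cdot F=1$ for every fibre $F$, so $C$ meets each fibre in a single point (with multiplicity one, since the intersection number is already $1$ and everything is effective); hence $C\to\mathbb{P}^1$ induced by $\Phi|_C$ is degree one, i.e.\ an isomorphism, and $C$ is a section of $\Phi$. That it is holomorphic is automatic as $C$ is a curve on the smooth projective surface $X$.

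The main obstacle, and the place where the hypothesis ``\emph{simple} singular fibre'' is essential, is controlling the fixed part of $|D|$ and knowing that $[D]$ is primitive in $NS(X)$: for a non-reduced (``multiple'') fibre type the class of the support is only half (or a fraction of) the true fibre class, $\dim H^0$ need not jump to $2$, and the argument that $|D|$ moves breaks down. So the delicate step is the case analysis over Kodaira's list verifying that for the simple types the configuration $D$ has primitive class with $\dim H^0(X,\mathcal O(D))=2$ and empty base locus; once that is in hand the rest is formal. I would lean on Lemma~\ref{5}'s Riemann–Roch bookkeeping and on the classification of $(-2)$-curve configurations (root lattices $A_l,D_m,E_n$ recalled in Section~2) to make this step clean.
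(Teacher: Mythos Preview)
The paper does not give its own proof of this lemma: it is quoted verbatim as Lemma~1.1 of Shioda--Inose \cite{Shioda} and used as a black box, so there is no argument in the present paper to compare your proposal against.

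That said, your outline is essentially the standard proof (and close to what Shioda and Inose themselves do): from $D^{2}=0$ and Riemann--Roch one gets $\dim|D|\geq 1$, one argues $|D|$ is free of fixed components and base points, the induced morphism has connected genus-one fibres by adjunction, and uniqueness follows because any elliptic pencil with fibre $D$ is contained in $|D|$. One small point to tighten: your gloss on ``simple'' is slightly off. In Kodaira's terminology a \emph{simple} fibre is one that is not a \emph{multiple} fibre; the starred types $I_n^{*}$, $II^{*}$, $III^{*}$, $IV^{*}$ are simple even though some components appear with multiplicity $>1$. The relevant consequence of simplicity is not that $D$ is reduced but that the class $[D]$ is the actual fibre class (not a proper submultiple), hence primitive and with $h^{0}=2$. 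Your closing paragraph touches on this, but the phrasing ``the class of the support is only half\ldots'' suggests a confusion between non-reduced fibres and multiple fibres; you should separate these two phenomena in the write-up. The base-point-freeness step also deserves a cleaner argument than ``removing a component gives negative self-intersection''---the usual route is to use that on a K3 surface a nef divisor with $D^2=0$ and $[D]$ primitive has $|D|$ a free pencil (see e.g.\ Saint-Donat or the argument in Piatetski-Shapiro--Shafarevich).
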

\begin{lem}\label{8}$($Shioda and Inose $[4]$, lemma $1.2$$)$
\\
If a divisor $D$ on an elliptic surface has its support contained in a (simple) singular fibre, then the self-intersection number $D^2$ is non-positive, and $D^2=0$ if and 
only if $D$ is a multiple of a singular fibre.
\end{lem}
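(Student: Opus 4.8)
The plan is to reduce the assertion to \emph{Zariski's lemma}, the classical fact that the intersection matrix of the reduced components of a fibre of an elliptic surface is negative semi-definite with one-dimensional radical. Let $f : X \to C$ be the elliptic surface, let $p \in C$ be the base point of the fibre in question, and write $F = f^{-1}(p) = \sum_{i=0}^{m} n_i C_i$, where the $C_i$ are the distinct reduced irreducible components of $F$ and $n_i \ge 1$ their multiplicities; the fibre being \emph{simple} means $\gcd(n_0,\dots,n_m)=1$. First I would record three elementary inputs: (a) as a fibre, $F$ is algebraically equivalent to any other fibre $f^{-1}(q)$, and since $f^{-1}(q)$ is disjoint from $C_i$ for $q \ne p$ we get $F^2 = 0$ and $F \cdot C_i = 0$ for all $i$; (b) for $i \ne j$ the curves $C_i$ and $C_j$ are distinct and irreducible, so $C_i \cdot C_j \ge 0$; (c) $F$ is connected, so the dual graph of $\{C_i\}$ (with an edge $ij$ whenever $C_i \cdot C_j > 0$) is connected.

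Next I would establish $D^2 \le 0$ for any divisor $D = \sum_i a_i C_i$ supported in $F$ by the standard substitution. Putting $a_i = n_i y_i$ and using (a) in the form $\sum_j n_j\,(C_i \cdot C_j) = F \cdot C_i = 0$, a direct computation gives the identity
\[
D^{2} \;=\; -\tfrac{1}{2}\sum_{i \ne j} n_i n_j\,(C_i \cdot C_j)\,(y_i - y_j)^{2}.
\]
By (b) every term on the right-hand side is $\ge 0$, hence $D^2 \le 0$. If moreover $D^2 = 0$, then $(y_i - y_j)^2 = 0$ whenever $C_i \cdot C_j > 0$, so by the connectedness in (c) the $y_i$ are all equal to a single $\lambda \in \mathbb{Q}$, i.e.\ $a_i = \lambda n_i$ for all $i$; that is, $D = \lambda F$ as a $\mathbb{Q}$-divisor. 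Since $D$ is an integral divisor and $\gcd(n_i)=1$, choosing integers $c_i$ with $\sum c_i n_i = 1$ yields $\lambda = \sum c_i a_i \in \mathbb{Z}$, so $D$ is an integer multiple of the fibre $F$. Conversely $(kF)^2 = k^2 F^2 = 0$, which finishes the ``if and only if''.

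The only step that is not pure bookkeeping is the displayed identity together with the connectedness argument, i.e.\ Zariski's lemma itself; I expect this to be the main (if routine) obstacle, along with keeping track of the \emph{simple}-ness hypothesis so that the rational multiplier $\lambda$ is forced to be an integer. If one prefers to sidestep the general argument entirely, an alternative is to go through Kodaira's list case by case: for each singular fibre type the intersection matrix of its components is the negative of the Cartan matrix of an affine root system $\widetilde{A}_n$, $\widetilde{D}_n$ or $\widetilde{E}_n$, and these are precisely the negative semi-definite symmetric matrices whose radical is spanned by the vector of marks $(n_i)$; the conclusion then follows exactly as above.
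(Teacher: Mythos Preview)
Your argument is correct and is precisely the standard proof of Zariski's lemma (as in Barth--Hulek--Peters--Van de Ven, \emph{Compact Complex Surfaces}, Lemma~III.8.2, or Mumford's original); the displayed identity, the connectedness step, and the use of simplicity to force $\lambda\in\mathbb{Z}$ are all handled cleanly.

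As for comparison with the paper: the paper does \emph{not} supply its own proof of this lemma. It is stated as Lemma~\ref{8} with the attribution ``Shioda and Inose~[4], lemma~1.2'' and then used as a black box in the proof of Theorem~8.1. Shioda and Inose themselves give essentially the same Zariski-type argument (or defer to Kodaira), so your approach is in line with the original source. Your alternative suggestion of running through Kodaira's list and recognising the affine Cartan matrices is also valid and is sometimes how the result is quoted in the elliptic-surface literature, but the uniform Zariski argument you wrote is cleaner and is what one would expect here.
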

\begin{lem}\label{9}
$($Shioda and Inose $[4]$, lemma $1,3$$)$
\\ Let $\Phi:X\to \mathbb{P}^{1}$ denote an elliptic K$3$ surface, and $D_{v}=\Phi^{-1}(t_{v}) \; (1\leq v \leq k)$ be the singular fibres of $\Phi$.
We denote by $m_{v}$ and $m_{v}^{(1)}$ respectively the number of irreducible components of $D_{v}$,
 and the number of components of $D_{v}$ of which multiplicity is $1$. Assume that $\Phi$ has a section.
Then
\\
$(i)$ The Picard number  $\rho(X)$ of X
 is given by the following formula:
 \\[1ex]
 $(4.1)$  \indent $\rho(X)=r(\Phi)+2+\sum_{v=1}^k (m_{v}-1)$.
\\[2ex]
$(ii)$ When $r(\Phi)=0$, we have 
\\[1ex]
$(4.2)$ \indent $ |\mathop{\mathrm{det}}T_{X}|=|\mathop{\mathrm{det}}S_{X}|=(\prod_{v=1}^k m_{v}^{(1)})/n(\Phi)^{2}$.
\end{lem}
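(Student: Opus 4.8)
The plan is to derive both identities from the Shioda--Tate isomorphism \eqref{eq:MW}, from the behaviour of discriminants under finite-index extensions (Lemma \ref{2}), and from Kodaira's classification of singular fibres. Write $L_{\Phi}=U\oplus(\oplus_{v=1}^{k}L_{v})$ for the trivial sublattice of $S_{X}$, where $L_{v}$ is the irreducible root lattice attached to $D_{v}$.

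The first step is to record that $\operatorname{rank}L_{v}=m_{v}-1$ for every Kodaira type. This is immediate from the classification: $I_{n}$ gives $L_{v}=A_{n-1}$ with $m_{v}=n$; $I_{n}^{*}$ gives $D_{n+4}$ with $m_{v}=n+5$; $III$ and $IV$ give $A_{1}$ and $A_{2}$; $IV^{*}, III^{*}, II^{*}$ give $E_{6}, E_{7}, E_{8}$ with $m_{v}=7,8,9$; and $I_{1}, II$ give the zero lattice with $m_{v}=1$. Hence $\operatorname{rank}L_{\Phi}=2+\sum_{v=1}^{k}(m_{v}-1)$. For part $(i)$, the short exact sequence $0\to L_{\Phi}\to S_{X}\to S_{X}/L_{\Phi}\to 0$ of finitely generated abelian groups gives $\rho(X)=\operatorname{rank}S_{X}=\operatorname{rank}L_{\Phi}+\operatorname{rank}(S_{X}/L_{\Phi})$, and by \eqref{eq:MW} the free rank of $S_{X}/L_{\Phi}\cong MW(\Phi)$ equals $r(\Phi)$; substituting the formula for $\operatorname{rank}L_{\Phi}$ yields $(4.1)$.

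For part $(ii)$, assume $r(\Phi)=0$. Then $MW(\Phi)$ is finite of order $n(\Phi)$, so $L_{\Phi}$ is a sublattice of $S_{X}$ of index $n(\Phi)$, and Lemma \ref{2} gives $d(L_{\Phi})=d(S_{X})\cdot n(\Phi)^{2}$. Since $d(U)=1$ and discriminants are multiplicative over orthogonal direct sums, $d(L_{\Phi})=\prod_{v=1}^{k}d(L_{v})$. The key arithmetic identity is $d(L_{v})=m_{v}^{(1)}$ for each Kodaira type, which I would check from the list: $d(A_{n-1})=n$ and $I_{n}$ has $n$ multiplicity-one components; $d(D_{n+4})=4$ and $I_{n}^{*}$ has four of them (the tips of the two forks); $d(E_{6})=3$, $d(E_{7})=2$, $d(E_{8})=1$ match the numbers of multiplicity-one components of $IV^{*}, III^{*}, II^{*}$; and $d=1=m_{v}^{(1)}$ for $I_{1}$ and $II$. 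Therefore $d(S_{X})=(\prod_{v=1}^{k}m_{v}^{(1)})/n(\Phi)^{2}$. Finally, $S_{X}$ and $T_{X}$ are mutually orthogonal primitive sublattices of the unimodular lattice $H^{2}(X,\mathbb{Z})$, so Lemma \ref{4} and the isomorphism $r_{S_{X},T_{X}}$ following it give $A_{S_{X}}\cong A_{T_{X}}$, whence $|\det S_{X}|=|\det T_{X}|$; this establishes $(4.2)$.

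The only genuine obstacle is organisational: one must run through the Kodaira list uniformly so that the correspondence between a fibre type, its root lattice $L_{v}$, and the pair $(m_{v},m_{v}^{(1)})$ is verified in every case, including the degenerate fibres $I_{1}$ and $II$, which contribute trivially to both sides. Once the entries $d(L_{v})=m_{v}^{(1)}$ and $\operatorname{rank}L_{v}=m_{v}-1$ are in hand, the proof is a direct assembly of Lemma \ref{2}, Lemma \ref{4}, and \eqref{eq:MW}.
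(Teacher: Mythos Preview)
The paper does not supply its own proof of this lemma; it is quoted as Lemma~1.3 of Shioda--Inose \cite{Shioda} and used as a black box. Your argument is correct and is precisely the standard derivation of the Shioda--Tate formulae: it rests on the isomorphism \eqref{eq:MW}, the index formula of Lemma~\ref{2}, Lemma~\ref{4} for $|A_{S_{X}}|=|A_{T_{X}}|$, and a type-by-type check on the Kodaira list. One small completeness remark: in verifying $d(L_{v})=m_{v}^{(1)}$ you omitted types $III$ and $IV$, but these satisfy $d(A_{1})=2$, $d(A_{2})=3$ with two and three multiplicity-one components respectively, so the identity holds there as well.
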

As in introduction,
they showed the theorem.
\begin{thm}\label{10}(Shioda and Inose $[4]$)
\\
There exists a bijective map from the set $\mathcal{S}$ of equivalence classes of singular K$3$ surfaces
to the set $\mathcal{T}$ of equivalence classes of positive definite oriented even lattices of rank $2$.
\end{thm}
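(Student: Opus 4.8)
The plan is to prove that $X \mapsto [T_X]$ is the asserted bijection onto $\mathcal{T}$, where $T_X$ is given the orientation for which $(\operatorname{Re}\omega_X, \operatorname{Im}\omega_X)$ is a positively oriented basis of $T_X\otimes\mathbb{R}$. First I would check the map is well defined. Since $X$ is singular, $\rho(X)=20$, so $\operatorname{rank} T_X = 22-20 = 2$; $T_X$ is even as a sublattice of the even lattice $H^2(X,\mathbb{Z})$; and since $NS(X)$ contains an ample class it has signature $(1,19)$ by the Hodge index theorem, so $T_X$ has signature $(2,0)$, i.e.\ it is positive definite. Writing $\omega_X = u+iv$ with $u,v$ real, the relations $\langle\omega_X,\omega_X\rangle=0$ and $\langle\omega_X,\bar\omega_X\rangle>0$ force $u,v$ to be orthogonal of equal positive norm and hence a basis of $T_X\otimes\mathbb{R}$, so the orientation above makes sense. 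If $\psi\colon X'\to X$ is an isomorphism, then $\psi^*$ is an isometry $H^2(X,\mathbb{Z})\to H^2(X',\mathbb{Z})$ with $\psi^*\omega_X\in\mathbb{C}\,\omega_{X'}$, so it restricts to an isometry $T_X\to T_{X'}$ carrying one orientation to the other; thus $[T_X]$ depends only on the isomorphism class of $X$.

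For injectivity the key Hodge-theoretic remark is: for a positive definite even oriented lattice $T$ of rank $2$, the conic $\langle\omega,\omega\rangle=0$ in $\mathbb{P}(T\otimes\mathbb{C})\cong\mathbb{P}^1$ consists of two points, complex conjugate to one another (since $T$ is real and definite), and the orientation of $T$ singles out the one, say $\omega_T$, for which $(\operatorname{Re}\omega_T,\operatorname{Im}\omega_T)$ is positively oriented; any orientation-preserving isometry $T\to T'$ carries $\omega_T$ into $\mathbb{C}\,\omega_{T'}$. Now assume $\sigma_0\colon T_X\to T_{X'}$ is an orientation-preserving isometry; then $\sigma_{0,\mathbb{C}}(\omega_X)\in\mathbb{C}\,\omega_{X'}$. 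Since $H^2(X,\mathbb{Z})$ is unimodular, the gluing isomorphism $r_{NS(X),T_X}$ of Lemma \ref{4} anti-isometrically identifies $q_{NS(X)}$ with $q_{T_X}$ (and likewise for $X'$), and as $\operatorname{rank} NS(X)=20\ge l(A_{T_X})+2$, Lemma \ref{3} gives $NS(X)\cong NS(X')$ and surjectivity of $O(NS(X))\to O(q_{NS(X)})$. Hence I can modify a chosen isometry $NS(X)\to NS(X')$ by an element of $O(NS(X))$ so that, together with $\sigma_0$, it respects the gluing data $r_{NS(X),T_X}$ and $r_{NS(X'),T_{X'}}$; such a compatible pair extends to an isometry $\phi\colon H^2(X,\mathbb{Z})\to H^2(X',\mathbb{Z})$ restricting to $\sigma_0$ on $T_X$, so $\phi(\omega_X)\in\mathbb{C}\,\omega_{X'}$, giving Torelli condition (a). Replacing $\phi$ by $-\phi$ if necessary (still orientation-preserving on the rank-$2$ lattice $T_{X'}$, still satisfying (a)), I may assume $\phi(P^+(X))=P^+(X')$; then $\phi(D(X))$ is a chamber for $W(X')$ acting on $P^+(X')$, so $w\phi(D(X))=D(X')$ for a suitable $w\in W(X')$. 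Since $w$ acts trivially on $T_{X'}$, the isometry $w\phi$ still satisfies (a) and now also (b), and the Torelli theorem yields $X\cong X'$.

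For surjectivity, given a positive definite even lattice $T$ of rank $2$ with a fixed orientation, I would first embed $T$ primitively into $\Lambda=U^{\oplus 3}\oplus E_8^{\oplus 2}$; this exists by Nikulin's existence theorem \cite{Nikulin}, since $\Lambda$ is indefinite unimodular of signature $(3,19)$, $3\ge 2$, $19\ge 0$, and $\operatorname{rank}\Lambda-\operatorname{rank} T = 20 \ge l(A_T)+2$. Let $N=T^\perp$ in $\Lambda$, which is even, primitive, of signature $(1,19)$. The orientation of $T$ determines a point $\omega_T\in\mathbb{P}(T\otimes\mathbb{C})\subset\mathbb{P}(\Lambda\otimes\mathbb{C})$ as above, with $\langle\omega_T,\omega_T\rangle=0$ and $\langle\omega_T,\bar\omega_T\rangle>0$, so $\omega_T\in\Omega$. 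By surjectivity of the period map \cite{T} there is a marked K3 surface $(X,\alpha_X)$ with $\alpha_X(\omega_X)\in\mathbb{C}\,\omega_T$. For $y\in\Lambda\otimes\mathbb{R}$, $\langle y,\omega_T\rangle=0$ implies $\langle y,\bar\omega_T\rangle=0$, hence $y\perp T\otimes\mathbb{C}$ and $y\in N\otimes\mathbb{R}$; so $\{y\in\Lambda\otimes\mathbb{R}:\langle y,\omega_T\rangle=0\}=N\otimes\mathbb{R}$, and therefore $\alpha_X(NS(X))=\Lambda\cap(N\otimes\mathbb{R})=N$. Thus $\rho(X)=20$, so $X$ is singular, $T_X=NS(X)^\perp\cong N^\perp=T$, and $\alpha_X$ carries the orientation of $T_X$ induced by $\omega_X$ to that of $T$; hence $[T_X]=[T]$ in $\mathcal{T}$.

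I expect the main obstacle to be the extension step in the injectivity argument: turning an abstract orientation-preserving isometry of transcendental lattices into an isometry of all of $H^2(X,\mathbb{Z})$ satisfying \emph{both} Torelli conditions. This relies on the discriminant-form gluing of Lemma \ref{4}, on Nikulin's Lemma \ref{3} (whose hypothesis $\operatorname{rank}\ge l(A)+2$ is met precisely by the rank-$20$ N\'eron--Severi lattice), and on the Kähler-cone bookkeeping with $W(X')$ and $\pm 1$, where care is needed so that none of the adjustments destroys condition (a) or the orientation. By comparison, once the primitive embedding (Nikulin) and surjectivity of the period map are invoked, the surjectivity half is essentially a direct computation.
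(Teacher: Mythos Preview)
Your argument is correct, and your treatment of injectivity via the Torelli theorem with discriminant-form gluing is exactly the mechanism the paper alludes to when it says ``By Torelli theorem for K3 surfaces, it is easy to show the injectivity of this map.'' You have simply filled in the details (the extension through Lemma~\ref{3} and Lemma~\ref{4}, and the $\pm 1$/$W(X')$ adjustment for the K\"ahler cone) that the paper leaves implicit.

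The surjectivity half, however, follows a genuinely different route from the paper. The paper (following Shioda--Inose \cite{Shioda}) proves surjectivity by \emph{explicit construction}: for each class $[T]\in\mathcal{T}$ one starts from a product $E_\tau\times E_{\tau'}$ of CM elliptic curves, passes to the associated Kummer surface, and then builds a further $2:1$ correspondence (the Shioda--Inose structure) to produce a singular K3 surface whose transcendental lattice is exactly $T$. Your argument instead invokes Nikulin's existence theorem for primitive embeddings together with the surjectivity of the period map \cite{T}, bypassing any geometry. Both are valid. Your approach is shorter and conceptually uniform, but it is worth noting that it relies on results (Nikulin \cite{Nikulin}, Todorov \cite{T}) that appeared \emph{after} Shioda--Inose's 1977 paper, so the explicit construction was historically unavoidable. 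More importantly for the present paper, the Shioda--Inose construction yields concrete elliptic fibrations and divisor configurations on the resulting surfaces, which is precisely the kind of information exploited later (e.g.\ in Sections~5--8); the abstract period-map argument tells you the surface exists but gives no handle on its geometry.
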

The map is defined to be
$X \mapsto T_{X}$.
By Torelli theorem for K3 surfaces,
it is easy to show the injectivity of this map.
For the surjectivity,
Shioda and Inose explicitly constructed singular K3 surfaces.
\\[1ex]
Moreover they gave another construction for singular K3 surfaces with
discriminant $3$ and $4$. We will explain this construction in the next section.
%
%
%
\section{The construction of  \texorpdfstring{$X_{3}$, $X_{4}$ and $X_{7}$}{X3, X4 and X7}}
Let $\mathcal{T}$ be a set of equivalence classes of positive definite oriented even lattice of rank $2$.
There is the classification table of $\mathcal{T}$ in terms of discriminants in \cite{Conway}.
According to this table,
there exists uniquely an element of $\mathcal{T}$ with discriminant 3, 4 and 7, respectively. This means the singular K3 surfaces with discriminant $d$ is unique up to isomorphisms for $d=3,\:4,\:7$,
so we denoted by $X_{d}$ these K3 surfaces, respectively.
\begin{rem}
In general, there exist singular K$3$ surfaces which have the same discriminant,
but is not isomorphic to each other.
\end{rem}
\subsection{The construction of \texorpdfstring{$X_{3}$ and $X_{4}$}{X3 and X4}}
This construction is given by Shioda and Inose (\cite{Shioda}).
\\[1ex]
(i) Let $E_{3}$ be an elliptic curve $\mathbb{C}/\mathbb{Z}+\mathbb{Z}\omega$,
where $\omega=\exp(2\pi \sqrt{-1}/3)$,
and let $A=E_{3}\times E_{3}$.
The automorphism of $A$ is defined to be:
$$ \sigma:A\to A,\; \sigma(x,y)=(\omega x,\omega^{2} y).$$
Then the minimal resolution $Y_{3}$ of the quotient $A/\sigma$ is a singular K3 surface isomorphic to $X_{3}$.
\\[1ex]
(ii) Let $E_{4}$ be an elliptic curve $\mathbb{C}/\mathbb{Z}+\mathbb{Z}i$, where $i=\sqrt{-1}$,
and let $A=E_{4}\times E_{4}$.
The automorphism of $A$ is defined to be:
$$ \sigma:A\to A,\; \sigma(x,y)=(i x,-iy).$$
Then the minimal resolution $Y_{4}$ of the quotient $A/\sigma$ is a singular K3 surface isomorphic to $X_{4}$.
\\
\indent
Shioda and Inose constructed an elliptic fibration on $Y_{k}$ $(k=3,4)$ for which the trivial lattice associated to is of rank 20.
Hence $Y_{k}$ is a singular K3 surface.
By the formula (4.2), they calculated the discriminant of $Y_{k}$, respectively.
%
%
\subsection{The construction of  \texorpdfstring{$X_{7}$}{X7} (the result of Naruki)}
Here we give the result of Naruki \cite{Naruki}.
\\
We denote $\exp(2\pi \sqrt{-1}/7)$ by $\zeta$.
Let $\mathfrak{p}$ be a principal ideal of the cyclotomic field $\mathbb{Q}(\zeta)$ generated by $1-\zeta$ which includes $7$.
\\[1ex]
Naruki defined a Hermitian form $H_{7}$:
$$  H_{7}(z)=z_{1} \overline{z_{1}}+z_{2} \overline{z_{2}} +(\zeta+ \overline{\zeta})z_{3} \overline{z_{3}}
\; \,\,\, \mbox{for} \: z=(z_{1},z_{2},z_{3})\in \mathbb{C}^{3}.$$
Let $B_{7}$ be the set
$$B_{7}:= \{ (z_{1}:z_{2}:z_{3}) \in \mathbb{P}^{2}\: |\; H_{7}(z)<0 \}. $$
Since the signature of $H_{7}$ is (2,1),
$B_{7}$ is isomorphic to the complex $2$-ball.
Let $SU_{7}$ be the group consisting of $(3,3)$-matrices with determinant 1 which is unitary with respect to $H_{7}$.
The group $SU_{7}$ acts $B_{7}$.
Let $\Gamma_{7}$ be the subgroup of $SU_{7}$ consisting of elements whose entries are integers of $\mathbb{Q}(\zeta)$,
and let
$\Gamma_{7}^{'}$ (resp. $\Gamma_{7}^{''}$) be a subgroup of $\Gamma_{7}$
consisting of elements which are congruent to identity modulo $\mathfrak{p}$ (resp. $\mathfrak{p}^{2}$).
Then Naruki's main result is as follows:
\begin{thm}(Naruki $[1]$, theorem $1$)
\\
The quotient surface $B_{7}/\Gamma_{7}^{'}$ is a singular K$3$ surface isomorphic to $X_{7}$.
The branching locus of $B_{7}/\Gamma_{7}^{''} \to B_{7}/\Gamma_{7}^{'}$ consists of twenty-eight rational curves,
and the group $\Gamma_{7}^{''}/\Gamma_{7}^{'}$ acts on these curves as permutations.
\end{thm}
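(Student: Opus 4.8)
The plan is to verify in turn that $S:=B_{7}/\Gamma_{7}^{'}$ is a smooth compact complex surface, that it is a K3 surface, that its Picard number is $20$, that its transcendental lattice is $[2,1,4]$, and finally to describe the branch locus of the intermediate covering. First I would establish that $S$ is a smooth compact complex surface. Compactness is automatic: the Galois conjugates of $\zeta+\overline{\zeta}=2\cos(2\pi/7)$ are negative, so $H_{7}$ becomes definite at the remaining two archimedean places of $\mathbb{Q}(\zeta+\overline{\zeta})$; hence $SU_{7}$ is anisotropic there and the arithmetic quotient is compact with no cusps. For smoothness one must control the torsion of $\Gamma_{7}^{'}$: a torsion element congruent to the identity modulo $\mathfrak{p}$ has order a power of $7$, and I would show by a direct matrix computation over $\mathbb{Z}[\zeta]$ that the only such elements are complex reflections (or that $\Gamma_{7}^{'}$ acts freely away from the mirrors); a quotient by a complex reflection is smooth, and any residual cyclic quotient singularity is resolved by the standard Hirzebruch--Jung procedure.

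Next I would pin down the analytic invariants. Using orbifold proportionality $c_{1}^{2}=3c_{2}$ for ball quotients together with the exceptional and ramification corrections coming from the mirrors, I expect $c_{1}^{2}(S)=0$ and $c_{2}(S)=24$; combined with $b_{1}(S)=0$ (equivalently, with Noether's formula once $c_{2}=24$ and $\chi(\mathcal{O}_{S})=2$) and with $K_{S}$ numerically trivial, one concludes that $S$ is a K3 surface. The delicate point is exactly that ball quotients tend to be of general type: what makes $S$ a K3 surface is that the reflection mirrors contribute precisely the amount of ramification needed to trivialize $K_{S}$, so this step forces an honest computation with $\Gamma_{7}^{'}$ and $\mathbb{Z}[\zeta]$. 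To see that $S$ is moreover singular, I would pull back a nowhere-vanishing $2$-form $\omega_{S}$ to $B_{7}\subset\mathbb{P}^{2}$, obtaining a $\Gamma_{7}^{'}$-automorphic form; the action of the normalizer of $\Gamma_{7}^{'}$ in $SU_{7}$, in particular of the scalars from $\mathbb{Z}[\zeta]^{\times}$, endows the rational Hodge structure $T_{S}\otimes\mathbb{Q}$ with multiplication by $\mathbb{Q}(\zeta)$, which for dimension reasons descends to the imaginary quadratic subfield $\mathbb{Q}(\sqrt{-7})$. Hence $T_{S}\otimes\mathbb{Q}$ is $2$-dimensional with complex multiplication by $\mathbb{Q}(\sqrt{-7})$, so $\operatorname{rank}T_{S}=2$ and $\rho(S)=20$.

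To identify $S$ with $X_{7}$ I would exhibit an explicit elliptic fibration $\Phi\colon S\to\mathbb{P}^{1}$, for instance by realizing $S$ as a double cover of $\mathbb{P}^{2}$ branched over the Naruki sextic and taking the pencil of lines through its $D_{4}$-point, as indicated in the introduction. Then I would read off the configuration of singular fibres and the fact that $r(\Phi)=0$, and apply formula $(4.2)$ of Lemma \ref{9} to obtain $|\det T_{S}|=(\prod_{v}m_{v}^{(1)})/n(\Phi)^{2}=7$. A positive definite even binary lattice of discriminant $7$ is unique up to isometry (the class number of $\mathbb{Q}(\sqrt{-7})$ is one), namely $[2,1,4]$; hence by the Shioda--Inose bijection (Theorem \ref{10}) we conclude $S\cong X_{7}$.

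Finally, for the intermediate covering $B_{7}/\Gamma_{7}^{''}\to S=B_{7}/\Gamma_{7}^{'}$: this is a Galois covering whose group $\Gamma_{7}^{'}/\Gamma_{7}^{''}$ is a quotient of the $\mathfrak{p}$-congruence filtration, hence an $\mathbb{F}_{7}$-vector space, and its branch locus is the union of the images of the mirrors of those complex reflections lying in $\Gamma_{7}^{'}$ but not in $\Gamma_{7}^{''}$. I would enumerate the $\Gamma_{7}^{'}$-orbits of such mirrors and check that there are exactly twenty-eight of them, the count reflecting the $28$-element combinatorics of $\mathrm{PSL}(2,\mathbb{F}_{7})$ (equivalently, the bitangents of the associated quartic). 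Each mirror is a disk $B_{1}\subset B_{7}$ whose image, being a quotient of $B_{1}$ by an arithmetic Fuchsian group of this small level, compactifies to a rational curve; and $\Gamma_{7}^{'}/\Gamma_{7}^{''}$ permutes these curves because it preserves the reflection system. The two steps I expect to be genuinely laborious are the smoothness-together-with-triviality-of-$K_{S}$ computation and the exact count of twenty-eight mirror-orbits.
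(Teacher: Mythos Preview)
The paper does not prove this theorem. It is stated as a citation of Naruki's result (``Naruki $[1]$, theorem $1$'') and no argument is supplied; the paper then merely \emph{summarizes} two further constructions from Naruki's work---the realization of $B_{7}/\Gamma_{7}^{'}$ as a quotient of the level-$7$ elliptic modular surface $S(7)$, and the resulting double-plane model branched over the explicit sextic---without deriving them. So there is no proof in this paper to compare your proposal against.

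That said, your outline is broadly reasonable as a strategy for reconstructing Naruki's proof, and it differs in emphasis from what the paper reports of Naruki's route. The paper indicates that Naruki passes through $S(7)$: he realizes the ball quotient as a quotient of the elliptic modular surface of level $7$, reads off the elliptic fibration with three $I_{7}$ fibres directly from that, and then identifies the surface via its trivial lattice and discriminant. Your plan instead attacks the ball quotient intrinsically (cocompactness, orbifold Chern numbers, CM on $T_{S}$), only invoking an elliptic fibration at the end to compute $|\det T_{S}|$. The modular-surface route is more concrete and makes the count of $28$ curves transparent (they descend from the level-$7$ combinatorics), while yours is more structural but, as you acknowledge, leaves the ``$K_{S}$ trivial'' and ``exactly $28$ mirrors'' steps as genuine computations.

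One small caution: in your compactness paragraph the sign bookkeeping is off relative to the paper's definition of $H_{7}$. With $\zeta+\overline{\zeta}=2\cos(2\pi/7)>0$ the form as written would be definite, so either the paper has a typo in the coefficient or a different conjugate is intended; make sure you sort this out before relying on which archimedean places give the anisotropic form. Also, your description of the Galois group $\Gamma_{7}^{'}/\Gamma_{7}^{''}$ as an $\mathbb{F}_{7}$-vector space is consistent with the congruence filtration, but the permutation action on the $28$ curves then factors through a quotient you should identify explicitly rather than just gesturing at $\mathrm{PSL}_{2}(\mathbb{F}_{7})$.
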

Moreover, Naruki obtained another constructions of $B_{7}/\Gamma_{7}^{'}$ as follows.
Let $S(7)$ be the elliptic modular surface of level 7 (\cite{Shioda3}). This surface has an elliptic fibration $F:S(7) \to X(7)$,
where X(7) is the Klein quartic curve.
Naruki showed that $B_{7}/\Gamma_{7}^{'}$ is a quotient of $S(7)$,
and the fibration $F$ induces an elliptic fibration $f:B_{7}/\Gamma_{7}^{'} \to \mathbb{P}^{1}$,
which has three singular fibres of types $I_{7}$ and others are of type $I_{1}$.
$$
 \begin{CD}
   S(7) @>{F}>>X(7) \\
  @VVV @VVV \\
    B_{7}/\Gamma_{7}^{'} @>{f}>>\mathbb{P}^{1}
 \end{CD}
$$
Let $\iota$ be the involution associated to  $f:B_{7}/\Gamma_{7}^{'} \to \mathbb{P}^{1}$.
Blowing down $(-1)$-curves of the quotient surface of $B_{7}/\Gamma_{7}^{'}$ by $\iota$,
Naruki obtained a construction of $X_{7}$ by the double covering branched over projective plane.
The branching locus of this is given by the equation as follows:
$$(x_{0}^2x_{1}+x_{1}^2x_{2}+x_{2}^2x_{0}-3x_{0}x_{1}x_{2})^2-4x_{0}x_{1}x_{2}(x_{0}-x_{1})(x_{1}-x_{2})(x_{2}-x_{0})=0,$$
which has a $D_{4}$ singularity at (1,1,1) and $A_{4}$ singularities at (1,0,0), (0,1,0) and (0,0,1).
\section{The double plane model of a K3 surface}
%
\subsection{Sextic curves with a  \texorpdfstring{$D_{4}$}{D4} singularity and elliptic fibrations}
Let $C$ be a sextic curve on $\mathbb{P}^{2}$. We assume its singular points are of ADE-type.
Moreover we assume $C$ has a singularity, denoted by $p$, of type $D_{4}$.
We denote by $X_{C}$ the  minimal resolution of the double covering branched over $C$.
Then $X_{C}$ is a K3 surface. We call $C$ the branching locus of $X_{C}$.
We denote by  $\widetilde{\mathbb{P}^{2}}$ the surface given by blowing up $\mathbb{P}^{2}$ at $p$,
$E$ the exceptional divisor on $\widetilde{\mathbb{P}^{2}}$
and $\widetilde{C}$ the inverse image of $C$.
A canonical projection from $\widetilde{\mathbb{P}}^{2}$ to $E$, denoted by $\phi : \widetilde{\mathbb{P}}^{2} \to E$,
induces the projective bundle on $E$ $(\cong \mathbb{P}^{1})$ as given Figures \ref{blowingup1} and \ref{blowingup2}.
\begin{figure}[H]
   \centering
  \begin{minipage}{0.4\columnwidth}
   \centering
   \includegraphics[width=\columnwidth]{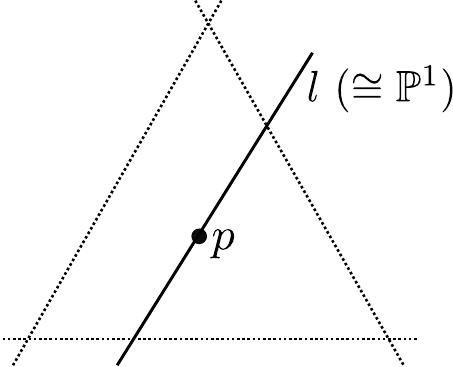}
   \caption{}
   \label{blowingup1}
  \end{minipage}
  \begin{minipage}{0.4\columnwidth}
   \centering
   \includegraphics[width=\columnwidth]{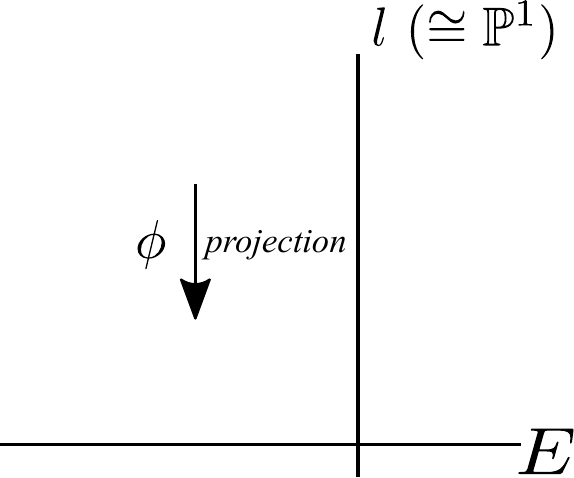}
   \caption{}
   \label{blowingup2}
  \end{minipage}
\end{figure}%
Since the general lines passing through $p$ intersects $C$ three times  outside the triple point $p$,
the inverse image of these lines are elliptic curves on $X_{C}$.
We denote by $g$ the rational curve on $X_{C}$ corresponding to $E$.
Since  these elliptic curves intersect $g$,
these family of elliptic curves defines an elliptic fibration of $X_{C}$ associated to $C$, denoted by $\Phi:X_{C}\to \mathbb{P}^{1}$
, and $g$ is a global section of $\Phi$.
Considering a natural correspondence from $X_{C}$ to $\widetilde{\mathbb{P}}^{2}$,
we have a diagram as follows:
$$
 \begin{CD}
   X_{C} @>{2:1}>>{\widetilde{\mathbb{P}}^{2}} \\
  @V{\Phi}VV @V{\phi}V{projection}V \\
    \mathbb{P}^{1} @>{\cong}>>E
 \end{CD}
$$
Let $l_{0}$ be a line on $\mathbb{P}^{2}$ passing through $p$ and
let $\widetilde{l_{0}}$ be the strict transform $l_{0}$ under the blowing-up $\pi :X_{C} \to \mathbb{P}^{2}$.
We assume that $\widetilde{l_{0}}$ intersects a singular point of $\widetilde{C}$.
Since the number of components of $\pi^{-1}(l_{0})$ is larger than $1$,
$\pi^{-1}(l_{0})$ is a singular fibre of $\Phi$.
\begin{figure}[H]
 \centering
 \includegraphics[width=8cm]{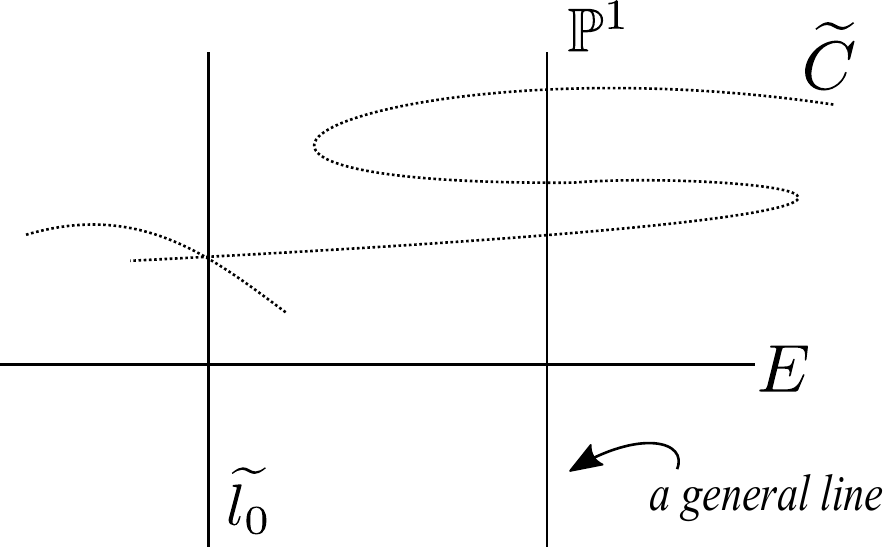}
 \caption{$C$ and lines passing through $p$}
 \label{fig:3}
\end{figure}
\subsection{Singular K3 surfaces with discriminant 3, 4, and 7}
We denote by $X_{d}$ the singular K3 surface of discriminant $d$ and by $C_{d}$ a branching locus if it can be constructed by the double covering $f:X \to \mathbb{P}^{2}$.
As in 5.2, for the case $d=7$, Naruki obtained the branching locus  in [1]. $C_{7}$ is given by the equation:
$$(x_{0}^2x_{1}+x_{1}^2x_{2}+x_{2}^2x_{0}-3x_{0}x_{1}x_{2})^2-4x_{0}x_{1}x_{2}(x_{0}-x_{1})(x_{1}-x_{2})(x_{2}-x_{0})=0.$$
Since $C_{7}$ has a $D_{4}$ singularity at $(1,1,1)$, there exist an elliptic fibration $f:X \to \mathbb{P}^{1}$ associated to $C$.
As in 5.2, this fibration equals the fibration $f:B_{7}/\Gamma_{7}^{'} \to \mathbb{P}^{1}$,
which has three singular fibres of types $I_{7}$ and others are of type $I_{1}$.
Then the trivial sublattice is isomorphic to $U\oplus A_{6}^{\oplus 3}$, denoted by $L_{f}$,
and the N\'{e}ron-Severi lattice $NS(X_{7})$ has a sublattice of finite index isomorphic to $L_{f}$.
In the case of $d=3$ and 4, we have similarly the following results.
\begin{prop}\label{12}
$X_{d} \; (d=3,4)$ can be constructed  by the double covering.
\\
$(i)$ $C_{3}$ is given by the equation 
$$(x_{0}-x_{1})(x_{1}-x_{2})(x_{2}-x_{0})\{(x_{0}+x_{1}+x_{2})^3+(x_{0}-x_{1})(x_{1}-x_{2})(x_{2}-x_{0})\}=0,$$
which has a $D_{4}$ singularity at $(1,1,1)$ and $A_{5}$ singularities at  the intersections of lines and elliptic curve of components of $C_{3}$. 
In particular, $NS(X_{3})$ has a sublattice of finite index isomorphic to
 $U\oplus E^{\oplus 3}_{6}$
\\
$(ii)$  $C_{4}$ is given by the equation 
$$ x_{0}x_{1}x_{2}(x_{0}-x_{1})(x_{1}-x_{2})(x_{2}-x_{0})=0,$$ which has $D_{4}$ singularities at intersections of six lines.
In particular, $NS(X_{4})$ has a sublattice of finite index isomorphic to $U\oplus D^{\oplus 3}_{6}$.
\end{prop}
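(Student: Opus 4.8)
The plan is to run the construction of Section 6.1 backwards: starting from the explicit equations, verify the singularities of $C_{d}$, read off the elliptic fibration defined by the $D_{4}$ point and its singular fibres, and then use the Shioda--Inose formulas together with the uniqueness of the singular K3 surface of a given discriminant to identify the resulting double plane $X_{C_{d}}$ with $X_{d}$. I describe $d=3$ in detail; $d=4$ is parallel.

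First I would settle the curve geometry. Write $C_{3}=\ell_{4}\ell_{5}\ell_{6}\cdot\mathcal{Q}$ with $\ell_{4}\colon x_{0}=x_{1}$, $\ell_{5}\colon x_{1}=x_{2}$, $\ell_{6}\colon x_{2}=x_{0}$ and $\mathcal{Q}=0$ the cubic $(x_{0}+x_{1}+x_{2})^{3}+(x_{0}-x_{1})(x_{1}-x_{2})(x_{2}-x_{0})$. A computation of the partial derivatives shows $\mathcal{Q}$ is a smooth cubic, while $\mathcal{Q}(1,1,1)=27\neq 0$, so the three lines $\ell_{i}$ are concurrent only at $(1{:}1{:}1)$, which is therefore an ordinary triple point, i.e.\ a $D_{4}$ singularity. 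Restricting $\mathcal{Q}$ to $\ell_{4}$ gives $(2x_{0}+x_{2})^{3}$, so $\ell_{4}$ meets $\mathcal{Q}$ only at $(1{:}1{:}-2)$ and with intersection multiplicity $3$; as $\mathcal{Q}$ is smooth there, $C_{3}$ has a contact of order $3$ of two smooth branches, hence an $A_{5}$ singularity, and cyclically at $(-2{:}1{:}1)$ and $(1{:}-2{:}1)$. No other singular points occur. (For $C_{4}$ one checks instead that of the $\binom{6}{2}$ pairwise intersections of the six lines, four are ordinary triple points, giving $D_{4}$ singularities, while the remaining three are ordinary nodes, giving three further $A_{1}$ singularities on the double cover.) By the construction of Section 6.1, $X_{C_{3}}$ is then a K3 surface with an elliptic fibration $\Phi$ and a section $g$, cut out by the pencil of lines through the $D_{4}$ point $(1{:}1{:}1)$; moreover $\rho(X_{C_{3}})=20$, since the exceptional $(-2)$-curves of the resolution span a copy of $D_{4}\oplus A_{5}^{\oplus 3}$ (resp.\ $D_{4}^{\oplus 4}\oplus A_{1}^{\oplus 3}$ for $C_{4}$) which, together with the pull-back $h$ of a line ($h^{2}=2$, and $h$ orthogonal to those curves), has rank $20$. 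So $X_{C_{3}}$ is a singular K3 surface.

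Next I would determine the singular fibres. The three components $\ell_{4},\ell_{5},\ell_{6}$ of $C_{3}$ through the $D_{4}$ point are singular fibres of $\Phi$, and the coordinate permutations preserving $C_{3}$ permute them transitively, so it suffices to analyse $F_{4}=\Phi^{-1}([\ell_{4}])$. Blowing up $(1{:}1{:}1)$, so that the branch curve becomes $\widetilde{C}_{3}+E$, and resolving the double cover along $\widetilde{\ell}_{4}$, one finds that $F_{4}$ is the union of the ramification curve $R_{4}$ over $\widetilde{\ell}_{4}$ (occurring with multiplicity $2$, since $\widetilde{\ell}_{4}\subset\widetilde{C}_{3}$), the $(-2)$-curve over the node $\widetilde{\ell}_{4}\cap E$ of the branch curve, and the chain of five $(-2)$-curves resolving the $A_{5}$ point over $(1{:}1{:}-2)$; since $R_{4}$ meets the middle member of that chain, the dual graph of $F_{4}$ is the affine diagram $\widetilde{E}_{6}$, so $F_{4}$, and likewise $F_{5},F_{6}$, is of Kodaira type $IV^{*}$. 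Because $e(IV^{*})=8$ and $\sum_{v}e(F_{v})=e(X_{C_{3}})=24$, there are no further singular fibres; hence the trivial sublattice of $\Phi$ is $L_{f}\cong U\oplus E_{6}^{\oplus 3}$, of finite index in $NS(X_{C_{3}})$ because both have rank $20$. (For $C_{4}$ the same argument, now with a $D_{4}$ point and an $A_{1}$ point --- rather than an $A_{5}$ point --- on each $\ell_{i}$, produces the dual graph $\widetilde{D}_{6}$, i.e.\ three fibres of type $I_{2}^{*}$ and $L_{f}\cong U\oplus D_{6}^{\oplus 3}$.) I expect this local resolution --- and in particular the need to distinguish $IV^{*}$, resp.\ $I_{2}^{*}$, from the other Kodaira fibres with seven components --- to be the main obstacle.

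Finally I would identify $X_{C_{d}}$ with $X_{d}$. By formula $(4.1)$ of Lemma \ref{9}, $r(\Phi)=0$, so $MW(\Phi)\cong NS(X_{C_{3}})/L_{f}$ is finite and, by Lemma \ref{2}, $|\mathrm{disc}\,NS(X_{C_{3}})|=|\mathrm{disc}\,L_{f}|/[NS(X_{C_{3}}):L_{f}]^{2}=27/[NS(X_{C_{3}}):L_{f}]^{2}$. Now $A_{L_{f}}\cong(\mathbb{Z}/3\mathbb{Z})^{3}$ has length $3$; but $T_{X_{C_{3}}}=NS(X_{C_{3}})^{\perp}$ inside the unimodular lattice $H^{2}(X_{C_{3}},\mathbb{Z})$ has rank $2$ and $A_{T_{X_{C_{3}}}}\cong A_{NS(X_{C_{3}})}$ by Lemma \ref{4}, so $L_{f}$ cannot equal $NS(X_{C_{3}})$. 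Hence $[NS(X_{C_{3}}):L_{f}]=3$ (the only remaining possibility in the displayed identity) and $|\mathrm{disc}\,T_{X_{C_{3}}}|=3$; since the unique positive definite even rank $2$ lattice of discriminant $3$ is $[2,1,2]$, we get $T_{X_{C_{3}}}\cong T_{X_{3}}$, and Theorem \ref{10} gives $X_{C_{3}}\cong X_{3}$. For $d=4$ the same length argument shows $L_{f}\neq NS(X_{C_{4}})$, and in addition the strict transforms of the preimages of the three coordinate lines $x_{i}=0$ are sections of $\Phi$ --- a general fibre of $\Phi$ is the double cover of $\mathbb{P}^{1}$ branched at its four intersection points with $E$ and with $x_{0}=0,x_{1}=0,x_{2}=0$, and these four points lift to the four $2$-torsion points of the fibre --- so $MW(\Phi)\supseteq(\mathbb{Z}/2\mathbb{Z})^{2}$; combined with $|\mathrm{disc}\,NS(X_{C_{4}})|=64/[NS(X_{C_{4}}):L_{f}]^{2}\geq 3$ this forces $[NS(X_{C_{4}}):L_{f}]=4$, $|\mathrm{disc}\,T_{X_{C_{4}}}|=4$, and $X_{C_{4}}\cong X_{4}$. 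In both cases the finite-index inclusion $L_{f}\hookrightarrow NS(X_{C_{d}})=NS(X_{d})$ is exactly the asserted description of $NS(X_{d})$.
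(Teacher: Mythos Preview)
Your argument is correct and follows the same overall strategy as the paper: verify the singularities of $C_d$, build the elliptic fibration from the $D_4$ point, identify the three reducible fibres (type $IV^{*}$ for $d=3$, type $I_2^{*}$ for $d=4$) and hence the trivial sublattice, and then pin down the discriminant to invoke the uniqueness of the singular K3 surface with that discriminant. The local analysis of the fibres and the identification of the sections coming from $x_i=0$ for $d=4$ match the paper's treatment essentially verbatim.

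The one place where you diverge from the paper is the computation of $n(\Phi)$ for $d=3$. The paper produces the extra sections explicitly: the line $x_0+x_1+x_2=0$ meets $C_3$ only at the three $A_5$ points, each with even multiplicity, so its preimage in the double cover splits into two disjoint smooth rational curves, each of which is a section of $\Phi$; together with $g$ this gives $n(\Phi)\ge 3$, hence $|\det T_{X_3'}|=27/n(\Phi)^2=3$ directly from formula $(4.2)$. You instead argue abstractly that $L_f\ne NS(X_{C_3})$ because $A_{L_f}\cong(\mathbb{Z}/3\mathbb{Z})^3$ has length $3$ while $A_{T_X}\cong A_{NS(X)}$ is generated by two elements, forcing the index to be $3$. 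Both arguments are valid; yours is slightly more economical (it avoids analysing the preimage of an auxiliary line), while the paper's has the advantage of actually naming generators of $MW(\Phi)$. Your Euler-characteristic check that there are no further singular fibres is an addition not in the paper (the paper gets the same conclusion implicitly from $\mathrm{rk}\,L_f=20=\rho$), but it does no harm.
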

\begin{proof}
By an easy calculation, it is easy to show these sextic curves have  singularity as above and are smooth elsewhere.
We denote by $X_{d}^{'} \; (d=3,4)$ the K3 surface which are double covering  branching over these  sextic curves.
Since $C_{d}$ have a $D_{4}$ singularity at $(1,1,1)$,
they have an elliptic fibration associated to $C$, denoted by $\Phi_{d}^{'}:X_{d}^{'} \to \mathbb{P}^{1}$.
We define $\widetilde{\mathbb{P}}^{2}$, $E$, $\widetilde{C_{d}}$ and $\pi:X_{d}^{'} \to \mathbb{P}^{2}$ in the same way as 6.1.
We have a diagram as follows:
$$
 \begin{CD}
   X_{d}^{'} @>{2:1}>>\widetilde{\mathbb{P}}^{2} \\
  @V{\Phi_{d}^{'}}VV @V{\phi}V{projection}V \\
    \mathbb{P}^{1} @>{\cong}>>E
 \end{CD}
$$
The lines $ l_{ij}$ given by $x_{i}-x_{j}=0 \; (i\neq j)$ are components of a branching locus.
we denote by $\widetilde{l_{ij}}$ the strict transform of $l_{ij}$ under the blowing-up.
These curves pass through singular points of $\widetilde{C_{d}}$.
Hence $\pi^{-1}(l_{ij})$ is a singular fibre of $\Phi_{d}^{'}$.
\\ 
In the case where $d$ is 3, 
there are two singular points $p_{ij}$ and $q_{ij}$ of $\widetilde{C_{3}}$ on $\widetilde{l_{ij}}$ as given in Figure \ref{X3fibre1}.
\begin{figure}[H]
   \centering
   \includegraphics{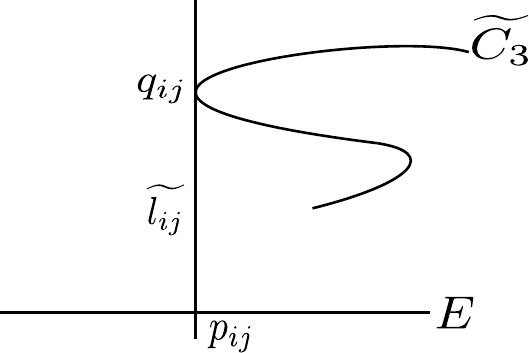}
   \caption{}
   \label{X3fibre1}
\end{figure}%
\noindent
Here, $p_{ij}$ (resp. $q_{ij}$) is an intersection of $\widetilde{l_{ij}}$ and $E$ (resp. the elliptic curve of component of $\widetilde{C_{3}}$).
Since $\widetilde{C_{3}}$ has an $A_{1}$ (resp. $A_{5}$) singularity at $p_{ij}$ (resp. $q_{ij}$),
the dual graph of components of $\pi^{-1}(l_{ij})$ is given by Figure \ref{X3fibre2}:
\begin{figure}[H]
  \centering
  \includegraphics[width=5cm]{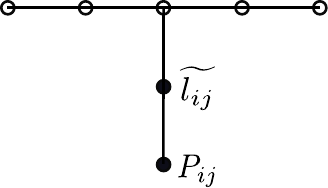}
  \caption{}
  \label{X3fibre2}
\end{figure}%
\noindent
Here, white circles are obtained by resolution at $q_{ij}$
and $P_{ij}$ is obtained by resolution at $p_{ij}$.
Hence we conclude types of $\pi^{-1}(l_{ij}) \; (i\neq j)$ are $I\hspace{-,1em}V^{*}$.
Since $X_{3}^{'} $ has a global section and three singular fibres of type $I\hspace{-,1em}V^{*}$,
the trivial sublattice is isomorphic to $U\oplus E_{6}^{\oplus 3}$.
In particular, $X_{3}^{'}$ is a singular K3 surface and $r(\Phi_{3}^{'})=0$.
Now, $X_{3}^{'} $ has three global sections. One of them is denoted by $g$ corresponding to $E$, and
others are obtained by the inverse image of the equation: $ x_{0}+x_{1}+x_{2}=0$.
From the formula (4,2), we deduce that
$$|\mathop{\mathrm{det}}T_{X_{3}^{'}}|=\frac{27}{n(\Phi_{3}^{'})^{2}}=3.$$
By \cite{Conway}, the binary quadratic form of discriminant 3 is unique.
By Torelli theorem, we conclude $X_{3}^{'} $ is isomorphic to $X_{3}$.
\\
In the case of $d$ is 4,
there are three singular points of $\widetilde{C_{4}}$ on $\widetilde{l_{ij}}$, denoted by $p_{ij},\: q_{ij}$ and $r_{ij}$, as given in Figure \ref{X4fibre1}.
\begin{figure}[H]
  \centering
  \includegraphics[width=5cm]{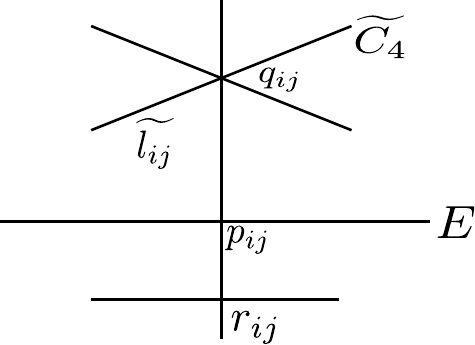}
  \caption{}
  \label{X4fibre1}
\end{figure}%
\noindent
$\widetilde{C_{4}}$ has $A_{1}$ singularities at $p_{ij}$ and $r_{ij}$,
and a $D_{4}$ singularity at $q_{ij}$.
Hence the dual graph of components of $\pi^{-1}(l_{ij})$ is given by Figure \ref{X4fibre2}:
\begin{figure}[H]
  \centering
  \includegraphics[width=5cm]{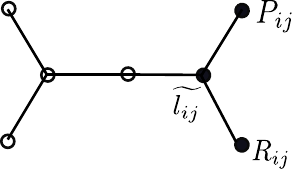}
  \caption{}
  \label{X4fibre2}
\end{figure}%
\noindent
Here, white circles are obtained by resolution at $q_{ij}$
and $P_{ij}$ (resp. $R_{ij}$) is obtained by resolution at $p_{ij}$ (resp. $q_{ij}$).
Hence we conclude types of $\pi^{-1}(l_{ij})$ $(i \neq j)$ are $I_{2}^{*}$.
Since the trivial sublattice is isomorphic to $U\oplus D_{6}^{\oplus 3}$,
$X_{4}^{'}$ is a singular K3 surface.
Now, $X_{4}^{'}$ has four global sections.
One of them is denoted by $g$ corresponding to $E$, and
others are obtained by the inverse image of the equation: $x_{i}=0$ ($i=1,2,3$).
From the formula (4.2), we deduce that
$$|\mathop{\mathrm{det}}T_{X_{4}^{'}}|=\frac{64}{n(\Phi_{4}^{'})^{2}}=4.$$
By the same reason of the case $d$ is 3,
we conclude $X_{4}^{'}$ is isomorphic to $X_{4}$.
\end{proof}
\section{Generalization of the construction}
In this section, we generalize a construction of 6.2 in terms of singularity of branching locus.
\\[2ex]
Let $C$ be a sextic curve and $p_{i} \; (i=1,2,3)$ and $q$ are points in $\mathbb{P}^{2}$.
We denote by $l_{i}$ the line passing through $p_{i}$ and $q$.
Now, we consider the conditions as follows:
\\[1ex]
(i) \indent
 $\{p_{i}, q |\; i=1, 2, 3 \}$ is in general position.
\\[1ex]
(ii)\indent
$C$ has a $D_{4}$ singularity at $q$ and an $A_{3}$ singularity at $p_{i}$, and is smooth elsewhere.
\\[1ex]
(iii)\indent The multiplicity of $C$ and $l_{i}$ at $p_{i}$ (resp. $q$) is 2 (resp. 4)
\begin{ex}
Let $D_{\mu}$ $(\mu \neq 0,-4)$ be a sextic curve given by the equation:
$$(x_{0}^2x_{1}+x_{1}^2x_{2}+x_{2}^2x_{0}-3x_{0}x_{1}x_{2})^2+\mu x_{0}x_{1}x_{2}(x_{0}-x_{1})(x_{1}-x_{2})(x_{2}-x_{0})=0.$$
Let $p_{i}$ $(i=1,2,3)$ be vertices of the triangle  $x_{0}x_{1}x_{2}=0$,
and let $q$ be $(1,1,1)$.
Then $D_{\mu}$, $p_{i}$ and $q$ satisfy the conditions as above.
\end{ex}
\begin{rem}
For the case of $\mu =-4$ (resp. $\mu =\infty$), $D_{-4}=C_{7}$ (resp. $D_{\infty}=C_{4}$),
where $C_{d} \,\,\, (d=4, 7)$ is a branching locus of $X_{d}$.
\end{rem}
\begin{lem}\label{13}
Let X be a K$3$ surface. The following conditions are equivalent.
\\
$(1)$
$X$ is isomorphic to $X_{C}$ and $r(\Phi_{C})=0$ and $n(\Phi_{C})=1,$
 where $C$ satisfies the conditions $(i)$, $(ii)$ and $(iii)$ as above.
\\
$(2)$
There exists an elliptic fibration $f:X\to \mathbb{P}^{1}$ such that $f$ has three singular fibres of types $I_{6}$, and others are $I_{1}$ or $I\hspace{-,1em}I$,
and $r(f)=0$ and $n(f)=1$.
\end{lem}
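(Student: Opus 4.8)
The plan is to prove the equivalence by analyzing the elliptic fibration $\Phi_C$ induced on $X_C$ by the pencil of lines through the $D_4$ point $q$, exactly as in the proof of Proposition \ref{12}. First I would set up, following Section 6.1, the blown-up plane $\widetilde{\mathbb{P}}^2$, the exceptional curve $E$, the strict transform $\widetilde C$, and the map $\pi:X_C\to\mathbb{P}^2$; the projection $\phi:\widetilde{\mathbb{P}}^2\to E$ realizes the linear system of lines through $q$ as a $\mathbb{P}^1$-bundle, and $g$ (the curve on $X_C$ over $E$) is a global section of $\Phi_C$. The general member of the pencil meets $C$ transversally in three points away from $q$, so its preimage is a smooth elliptic curve; the special members are the three lines $l_i$, and I claim each $\pi^{-1}(l_i)$ is a fibre of type $I_6$. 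Condition (iii) says $l_i$ meets $C$ at $q$ with multiplicity $4$ and at $p_i$ with multiplicity $2$, so by condition (ii) the only singularities of $\widetilde C$ on $\widetilde{l_i}$ are: the $A_1$ point where $\widetilde{l_i}$ crosses $E$ transversally inside the resolved $D_4$ configuration, and the $A_3$ point at $p_i$. Resolving, I would draw the dual graph of $\pi^{-1}(l_i)$ — the $A_3$ resolution contributes a chain, the $A_1$ crossing contributes the component through which $g$ meets the fibre — and check it is a cycle of six $(-2)$-curves, i.e. Kodaira type $I_6$. (This is the step where I expect to spend the most care: one must verify multiplicities of the double cover over each exceptional branch, use the $D_4$-resolution picture from Figures \ref{blowingup1}–\ref{blowingup2}, and confirm the chain closes up into a hexagon rather than, say, an $I_2^*$; condition (iii) is precisely what pins down the $I_6$ rather than the degenerations occurring for $C_3$, $C_4$, $C_7$.)

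Granting the $I_6$'s, the implication $(1)\Rightarrow(2)$ is then immediate with $f=\Phi_C$: the remaining critical values carry only irreducible singular fibres, so by properness of the fibration and the Euler-number count $\sum e(F_v)=24$ (three $I_6$'s use up $18$), the other singular fibres are six copies of $I_1$ or fewer copies of $I_1$ and $II$ totaling Euler number $6$; and $r(f)=r(\Phi_C)=0$, $n(f)=n(\Phi_C)=1$ by hypothesis. For $(2)\Rightarrow(1)$, I would start from an elliptic fibration $f:X\to\mathbb{P}^1$ with three $I_6$ fibres, rank $0$ and trivial torsion. By Lemma \ref{9}(i), $\rho(X)=0+2+3\cdot 5=17$... wait — that is not $20$; so in fact the trivial sublattice is $U\oplus A_5^{\oplus 3}$ of rank $17$ and $X$ need not be singular, which is the point of the generalization. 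I would then take the inversion involution $\iota$ of $f$ (Section 3.2): its quotient is a rational surface, and blowing down the $(-1)$-curves produced from the section $g$ and from the $I_6$ fibres yields $\mathbb{P}^2$, exhibiting $X$ as a double cover branched over a sextic $C$. Tracking the images of the three $I_6$ fibres and of the branch components of $X\to X/\iota$ shows $C$ acquires a $D_4$ point at the image $q$ of $g$ and $A_3$ points $p_i$, with $l_i$ the images of the $I_6$ fibres meeting $C$ as in (iii); the "general position" of $\{p_i,q\}$ follows because the three fibre classes plus $g$ span a copy of $U\oplus A_5^{\oplus 3}$ and so the corresponding lines cannot be concurrent or collinear.

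The key bookkeeping tool throughout is the correspondence, used in Lemmas \ref{7}–\ref{9}, between an ADE singularity of $\widetilde C$ lying on a line $\widetilde{l}$ through $q$ and the root-lattice summand it contributes to the fibre of $\Phi_C$ over the corresponding point of $E$; the translation table is: an $A_{2k-1}$ point of $\widetilde C$ on $\widetilde{l}$ resolves to a chain of $k-1$ components of the fibre and an $A_{2k}$ point to a chain of $k$ components, while the single transverse crossing of $\widetilde{l}$ with $E$ (forced by the $D_4$ resolution) contributes the component met by $g$. Putting $A_3$ at $p_i$ (two fibre components) together with the $E$-crossing configuration (four more) gives the hexagon $I_6$. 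The main obstacle, as noted, is carrying out this local double-cover computation cleanly enough to be sure the fibre is $I_6$ and not an additive degeneration; once that is done, both directions of the equivalence reduce to the arguments already deployed in Proposition \ref{12} and in Naruki's construction recalled in Section 5.2, applied verbatim with $E_6,D_6$ replaced by $A_5$.
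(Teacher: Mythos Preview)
Your approach matches the paper's in both directions: for $(1)\Rightarrow(2)$ the paper likewise observes that on $\widetilde{\mathbb P}^2$ the branch locus acquires an $A_1$ node at $\widetilde q_i$ (where $E$ meets a branch of $\widetilde C$) and retains the $A_3$ at $\widetilde p_i$, forcing $\pi^{-1}(l_i)$ to be of type $I_6$, while lines $l\neq l_i$ give irreducible fibres; for $(2)\Rightarrow(1)$ the paper also quotients by the inversion involution $\iota$ and contracts the resulting $(-1)$-curves down to $\mathbb P^2$, exactly Naruki's procedure. Two points in your sketch need correction, however.

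First, your translation table is miscounted: an $A_n$ singularity of the branch curve contributes $n$ exceptional $(-2)$-curves to the minimal resolution of the double cover, not $k-1$ or $k$. Thus the $A_3$ at $\widetilde p_i$ gives three components and the $A_1$ at $\widetilde q_i$ gives one; together with the two components arising because $\widetilde{l_i}$ meets the branch divisor with even local multiplicity at both points (so its preimage splits), this yields the six $(-2)$-curves of the $I_6$ hexagon.

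Second, and more substantively, your justification of condition (i) in $(2)\Rightarrow(1)$ fails. The trivial sublattice is $U\oplus A_5^{\oplus 3}$ purely by virtue of having a section and three $I_6$ fibres; this holds regardless of where the images $p_i$ land in $\mathbb P^2$, so it cannot detect their collinearity. The paper's argument instead uses the hypothesis $n(f)=1$ essentially: if the three $p_i$ lay on a common line $m$ (necessarily not through $q$), then $m\cdot C=6$ with even multiplicity at each $p_i$, so $\pi^{-1}(m)$ would split into two additional sections of $f$, giving three global sections in all and contradicting $n(f)=1$. This is exactly where the torsion hypothesis enters; without invoking it, condition (i) cannot be recovered.
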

\begin{proof}
(1)$\Rightarrow$(2):
Since $C$ has a $D_{4}$ singularity at $q$,
there exists an elliptic fibration $f:X_{C} \to \mathbb{P}^{1}$.
We define $\widetilde{\mathbb{P}}^{2}$, $E$, $\widetilde{C}$ and $\pi: X_{C} \to \mathbb{P}^{2}$ in the same way as 6.1.
We denote by $l_{i}$ the line passing through $p_{i}$ and $q$,
and $\widetilde{l_{i}}$ the strict transform of $l_{i}$ under the blowing-up $\pi :X_{C} \to \mathbb{P}^{1}$.
Let $\widetilde{p_{i}}$ (resp. $\widetilde{q_{i}}$) be the inverse image of $p_{i}$ (resp. the intersection of $\widetilde{l_{i}}$
and $E$).
By the condition (ii) and (iii),
$\widetilde{C}$ has an $A_{1}$ singularity at $\widetilde{q_{i}}$ and an $A_{3}$ singularity at $\widetilde{p_{i}}$.
Hence $\pi^{-1}(l_{i})$ is a singular fibre of type $I_{6}$.
\begin{figure}[H]
 \centering
 \includegraphics[width=8cm]{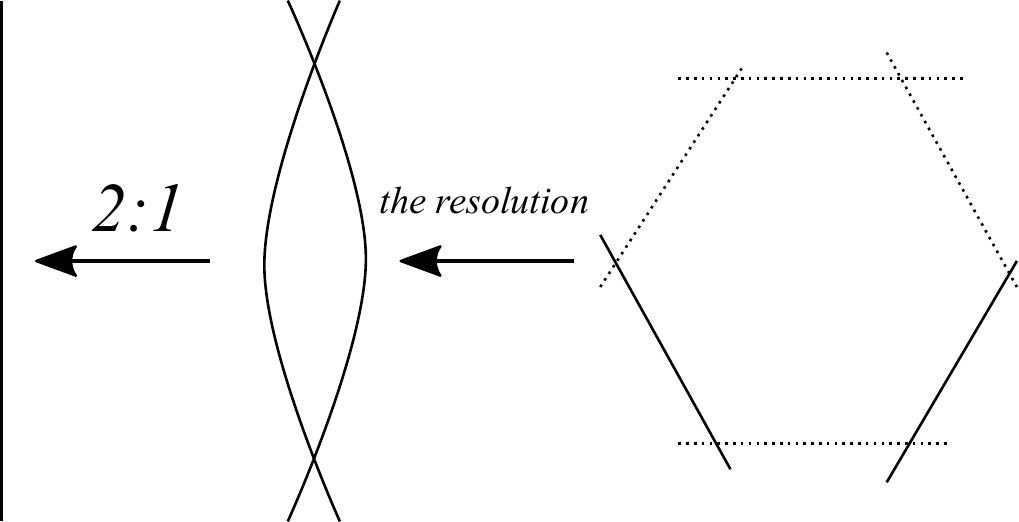}
 \caption{}
 \label{resolution}
\end{figure}
\noindent
Let $l$ be a line passing through $q$ with $l \neq l_{i}$,
and $\widetilde{l}$ the strict transform of $l$ under the blowing-up $\pi :X_{C} \to \mathbb{P}^{1}$.
Since $\widetilde{l}$ does not meet $\widetilde{C}$ at singular points of $\widetilde{C}$,
the number of component of $\pi^{-1}(l)$ is 1.
Hence,
if it is a singular fibre, it is of types $I_{1}$ or $I\hspace{-,1em}I$.
\\[1ex]
(2)$\Rightarrow$(1):
The same argument as Naruki \cite{Naruki} can be applied.
Let $\iota:X\to X$ be a inversion involution.
$X/\iota$ has the canonical projection $p:X/\iota \to \mathbb{P}^{1}$. 
On the surface $X/\iota$,
we have three configurations of the following figures coming from fibres of type $I_{6}$:
\begin{figure}[H]
 \centering
 \includegraphics{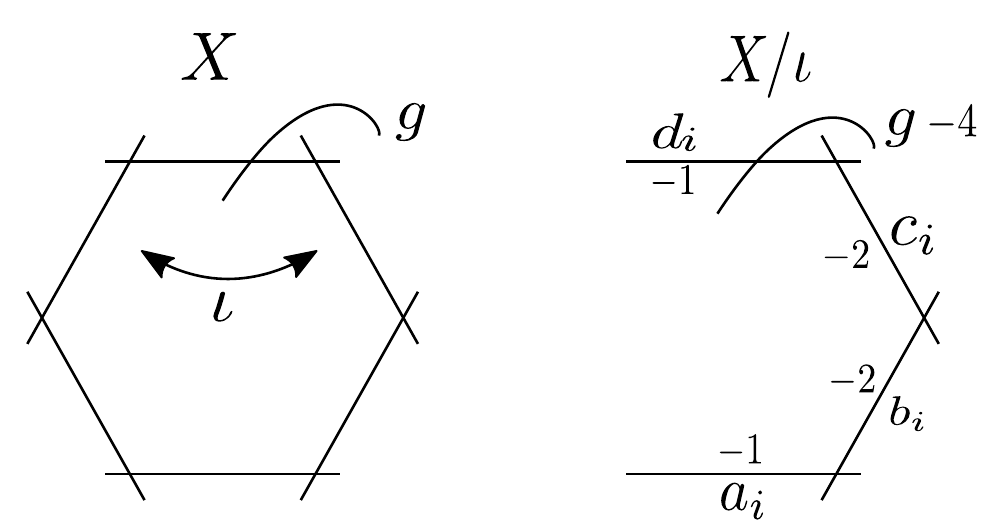}
 \caption{}
 \label{acton}
\end{figure}
\noindent
Here, $a_{i}$, $b_{i}$, $c_{i}$ and $d_{i}$ are the curves,
and $-1,-2,-4$ are the self-intersection numbers.
We blow down $a_{i}$ and $d_{i}$ ($i=1,2,3$),
and then the image of $b_{i}$ is exceptional curve.
Blowing down the image of $b_{i}$,
we obtain a $\mathbb{P}^{1}$-bundle over $\mathbb{P}^{1}$.
Since the self-intersection number of the image of $g$ is $-1$,
it is a tautological bundle.
Hence if we blow down the image of $g$,
we obtain the projective plane $\mathbb{P}^{2}$ and
the map from $X$ to $\mathbb{P}^{2}$, denoted by $\pi:X \to \mathbb{P}^{2}$.
Now we define $p_{i}$ and $q$ as follows:
$$p_{i}=\pi(a_{i})=\pi(b_{i}),\; q=\pi(g).$$

By the configuration of three singular fibres, the branching locus $C$ is a curve on $\mathbb{P}^{2}$ which has a $D_{4}$ singularity at $q$ and an $A_{3}$ singularity at $p_{i}$.
The image of fibres of $f$ are lines passing through $q$.
Let $l$ be a general line passing through $q$.
Since an elliptic curve $\pi^{-1}(l)$ intersects the fixed locus of $\iota$ exactly three times outside a global section $g$,
the line $l$ intersects the branching locus $C$ exactly three times outside triple point $q$.
By Bezout theorem, $C$ is a sextic curve.
\\
If $\{p_{i} | i=1, 2, 3\}$ is not in general position, $X$ has three global sections.
This contradicts  $n(f)=1$. Therefore, $\{p_{i} | i=1, 2, 3\}$ is in general position.
Hence $X$ is isomorphic to the minimal resolution of the double covering branched over $C$ which satisfies the condition (i), (ii) and (iii).
\end{proof}
\begin{cor}\label{14}
If X satisfies the conditions $(1)$ and $(2)$ of Lemma $\ref{13}$, we have $NS(X)=U\oplus A_{5}^{\oplus 3}$.
\end{cor}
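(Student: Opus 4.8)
The plan is to pin down the trivial sublattice $L_f$ of the elliptic fibration $f$ supplied by condition (2), and then show that $L_f$ already exhausts $NS(X)$. First I would recall from Section 3.2 that $L_f$ is determined purely by the types of the singular fibres: a fibre of type $I_6$ is a cycle of six rational curves, so its five components not meeting the section $g$ span a copy of $A_5$, while the fibres of type $I_1$ and $II$ are irreducible and contribute nothing. Adjoining the hyperbolic plane $U$ spanned by a general fibre and by (fibre $+$ section), this gives $L_f \cong U \oplus A_5^{\oplus 3}$, a sublattice of $NS(X)$ of finite index.

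Next I would invoke Shioda's isomorphism \eqref{eq:MW}, namely $MW(f) \cong NS(X)/L_f$. By hypothesis $r(f)=0$ and $n(f)=1$, so $MW(f)$ is the trivial group; hence $NS(X)/L_f = 0$, i.e.\ $NS(X) = L_f \cong U \oplus A_5^{\oplus 3}$.

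Alternatively — and this route is essentially free from the material already assembled in Section 4 — one can argue numerically. By Lemma \ref{9}(i) the Picard number is $\rho(X) = 0 + 2 + 3\cdot(6-1) = 17 = \operatorname{rank}(U \oplus A_5^{\oplus 3})$, so $L_f$ has finite index in $NS(X)$; and by Lemma \ref{9}(ii), since every component of an $I_6$ fibre has multiplicity one and the remaining fibres are irreducible, $|\mathop{\mathrm{det}}S_X| = 6^3/n(f)^2 = 216$, while $d(U \oplus A_5^{\oplus 3}) = d(A_5)^3 = 6^3 = 216$. Lemma \ref{2} then forces $|NS(X)/L_f|^2 = 1$, whence $NS(X) = L_f$.

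I do not expect any real obstacle here: the statement is a direct bookkeeping consequence of Lemma \ref{13}(2) together with the structure results of Shioda and Inose recalled in Section 4. The only points requiring a little care are the index shift in the identification $I_6 \leftrightarrow A_5$, and the observation that fibres of type $II$, although singular, are irreducible and hence do not enlarge $L_f$.
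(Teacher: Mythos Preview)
Your first argument is exactly the paper's: condition (2) of Lemma~\ref{13} pins down the trivial sublattice as $U\oplus A_5^{\oplus 3}$, and equation~(\ref{eq:MW}) together with $r(f)=0$, $n(f)=1$ gives $NS(X)/L_f\cong MW(f)=0$. The numerical alternative via Lemma~\ref{9} and Lemma~\ref{2} is also correct but superfluous once the Mordell--Weil isomorphism is invoked.
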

\begin{proof}
This is an immediate consequence of (2) of Lemma \ref{13} and the equation (\ref{eq:MW}).
\end{proof}
\begin{lem}\label{15}
Let $L$ be a unimodular even lattice with the signature $(3,19)$.
There exists a primitive sublattice of $L$, denoted by $T$, with the signature $(2,3)$ such that $q_{T}=-q_{(U\oplus A_{5}^{\oplus 3})}$.
\end{lem}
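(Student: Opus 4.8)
The plan is to use Nikulin's existence theory for primitive embeddings together with Lemma \ref{3}. First I would observe that by Corollary \ref{14}, a K3 surface $X$ satisfying the conditions there has $NS(X)=U\oplus A_{5}^{\oplus 3}$, which is an even lattice of signature $(1,9)$, so its orthogonal complement $T_{X}$ inside $H^{2}(X,\mathbb{Z})\cong U^{\oplus 3}\oplus E_{8}^{\oplus 2}$ is a primitive sublattice of signature $(2,3)$ with $q_{T_{X}}=-q_{NS(X)}=-q_{U\oplus A_{5}^{\oplus 3}}$. Since the elliptic fibration of Lemma \ref{13} with the prescribed configuration does occur (it is realized, e.g.\ by the branching loci $C_{3}$, $C_{4}$, $C_{7}$ and the family $D_{\mu}$ in the Example), such a K3 surface exists and hence so does the desired sublattice $T$. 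This already proves the lemma; but it is cleaner to give the argument purely lattice-theoretically, so that the later sections can invoke it without circular dependence on the geometric construction.

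Therefore the main route I would take is: compute the discriminant form $q_{U\oplus A_{5}^{\oplus 3}}$ directly. Since $U$ is unimodular, $q_{U\oplus A_{5}^{\oplus 3}}=q_{A_{5}}^{\oplus 3}$, and $A_{L}$ for $L=U\oplus A_{5}^{\oplus 3}$ is $(\mathbb{Z}/6\mathbb{Z})^{3}$, with $q_{A_{5}}$ the standard generator taking value $5/6 \in \mathbb{Q}/2\mathbb{Z}$ (equivalently $-1/6$). So I need a primitive sublattice $T\subset L$ (with $L=U^{\oplus 3}\oplus E_{8}^{\oplus 2}$) of signature $(2,3)$ whose discriminant group is $(\mathbb{Z}/6\mathbb{Z})^{3}$ and whose discriminant form equals $-q_{A_{5}}^{\oplus 3}$, i.e.\ has the generator values $+1/6$. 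By Nikulin's theorem on primitive embeddings into unimodular even lattices, it suffices to produce \emph{any} even lattice $T$ with signature $(2,3)$ and discriminant form $-q_{A_{5}}^{\oplus 3}$: indeed, $\mathrm{rank}(T)=5 \geq l(A_{T})+2 = 3+2$, so by Lemma \ref{3} such a $T$ is unique in its genus, and Nikulin's criterion for a primitive embedding $T\hookrightarrow L$ into a unimodular even lattice of signature $(3,19)$ is satisfied because $\mathrm{sign}(L)-\mathrm{sign}(T)=(1,16)$ is represented by an even lattice (namely $U\oplus A_{5}^{\oplus 3}$ itself, or $U\oplus E_{8}^{\oplus 2}$ minus corrections) with the complementary discriminant form — in fact the complement $NS=U\oplus A_{5}^{\oplus 3}$ is exactly such a lattice, which closes the loop. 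So the cleanest self-contained statement is: exhibit $T$ explicitly, then quote the embedding theorem.

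Concretely I would exhibit $T$ as follows: take $T = U \oplus \langle 6 \rangle \oplus \langle 6 \rangle \oplus \langle -6 \rangle$? — no, the signs and the form must match $-q_{A_{5}}$, which is the form of the rank-one lattice $\langle 6\rangle$ only up to the subtlety that $q_{\langle 6\rangle}$ takes value $1/6$ on a generator, matching $-q_{A_{5}}$; but I also need signature $(2,3)$, so I would take $T = U \oplus \langle 6 \rangle \oplus \langle -6 \rangle \oplus \langle -6 \rangle$, which has signature $(1+1,1+1+1)=(2,3)$ and discriminant form $q_{\langle 6\rangle}\oplus q_{\langle -6\rangle}\oplus q_{\langle -6\rangle}$ with generator values $(1/6,-1/6,-1/6)$. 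Since $q_{A_{5}}$ has generator value $-1/6$ and the three $A_{5}$-factors can be independently rescaled by units of $\mathbb{Z}/6\mathbb{Z}$ (the isometry $x\mapsto 5x$ of $\mathbb{Z}/6\mathbb{Z}$ multiplies the value by $25\equiv1$, so all generator values $\pm1/6$ give isomorphic forms), we get $q_{T}=-q_{U\oplus A_{5}^{\oplus 3}}$. Then apply Nikulin's primitive embedding theorem to embed $T\hookrightarrow L$ primitively; the orthogonal complement automatically has discriminant form $-q_{T}=q_{U\oplus A_{5}^{\oplus3}}$, confirming consistency. The main obstacle is bookkeeping with the $p$-adic discriminant-form conditions in Nikulin's criterion at $p=2$ and $p=3$ (the prime divisors of $6$); I expect this to be routine since $l(A_{T})=3$ is comfortably below $\mathrm{rank}(T)=5$ and below $22-\mathrm{rank}(T)=17$, so the relevant inequalities hold with room to spare, and the existence is guaranteed.
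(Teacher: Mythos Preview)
Your Nikulin-based strategy is viable in outline but differs from the paper's argument, and your explicit candidate for $T$ is incorrect. The paper proceeds concretely through the singular K3 surface $X_{7}$: since deleting the last simple root gives a primitive embedding $A_{5}\hookrightarrow A_{6}$, one obtains a primitive embedding $U\oplus A_{5}^{\oplus 3}\hookrightarrow U\oplus A_{6}^{\oplus 3}$; the latter is the trivial lattice of the elliptic fibration on $X_{7}$ with three $I_{7}$ fibres and hence sits with finite index in $NS(X_{7})$. By Lemma~\ref{1} it follows that $U\oplus A_{5}^{\oplus 3}$ is primitive in $NS(X_{7})$, hence in $H^{2}(X_{7},\mathbb{Z})\cong L$, and one takes $T$ to be its orthogonal complement. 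This is close in spirit to your first paragraph but uses $X_{7}$ (whose N\'{e}ron--Severi lattice is already understood from Section~5) rather than a hypothetical surface with $NS=U\oplus A_{5}^{\oplus 3}$, so no circularity arises; note also that $C_{3}$, $C_{4}$, $C_{7}$ do \emph{not} realize the hypotheses of Lemma~\ref{13}, since their associated fibrations have singular fibres of type $IV^{*}$, $I_{2}^{*}$, $I_{7}$ respectively, not $I_{6}$.

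The genuine gap in your main route is the discriminant-form arithmetic. You write that $q_{A_{5}}$ takes the value $5/6$ on a generator, ``equivalently $-1/6$''; but $5/6-(-1/6)=1\notin 2\mathbb{Z}$, so these are distinct in $\mathbb{Q}/2\mathbb{Z}$, and in any case $A_{5}$ is negative definite here, so the correct value is $-5/6$. You then argue that the forms with generator values $\pm 1/6$ on $\mathbb{Z}/6\mathbb{Z}$ are isomorphic because the automorphism $x\mapsto 5x$ multiplies the value by $25\equiv 1$; this computation shows exactly the \emph{opposite}, namely that every automorphism fixes the form, so $q_{\langle 6\rangle}\not\cong q_{\langle -6\rangle}$. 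In fact none of $\pm 1/6$ is congruent to $\pm 5/6$ modulo $2\mathbb{Z}$, so no diagonal lattice $U\oplus\langle\pm 6\rangle^{\oplus 3}$ realizes $-q_{A_{5}}^{\oplus 3}$. Your approach is still salvageable without exhibiting $T$ explicitly: apply Nikulin's primitive embedding theorem directly to $S=U\oplus A_{5}^{\oplus 3}$, for which $\mathrm{rank}(L)-\mathrm{rank}(S)=5>l(A_{S})=3$, to get a primitive $S\hookrightarrow L$, and then set $T=S^{\perp}$.
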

\begin{proof}
Let $r_{i}$ ($i=1,2,...n$) be the basis of the lattice $A_{n}$ which correspond to vertices of the Dynkin diagram $A_{n}$.
It is obvious that
$$A_{n-1}\hookrightarrow A_{n}\; :\; r_{i}\mapsto r_{i}\; (i=1,2...,n-1).$$
is a primitive embedding.
This induces a primitive embedding:
$$ U\oplus A_{5}^{\oplus 3}\hookrightarrow U\oplus A_{6}^{\oplus 3}.$$
We consider the case of $X_{7}$.
We can find the desired lattice $T$ as a sublattice of $H^{2}(X_{7},\mathbb{Z})(\cong L)$ as follows.
The lattice $U\oplus A_{6}^{\oplus 3}$ is of finite index in $NS(X_{7})$.
By lemma 2.1, the lattice $U\oplus A_{5}^{\oplus 3}$ is primitive in $NS(X_{7})$.
Then, $U\oplus A_{5}^{\oplus 3}$ is also primitive in $H^{2}(X_{7},\mathbb{Z})$.
We denote by $T$ the orthogonal complement of $U\oplus A_{5}^{\oplus 3}$ in $H^{2}(X_{7},\mathbb{Z})$.
$T$ satisfies the conditions.
\end{proof}
\begin{lem}\label{16}
Let L be a unimodular even lattice with the signature $(3,19)$, and
let T be a lattice as in Lemma $\ref{15}$.
Then there exists uniquely up to $O(L)$ a primitive embedding $T \hookrightarrow L$.
\end{lem}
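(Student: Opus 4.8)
The plan is to apply Nikulin's theory of primitive embeddings of lattices into unimodular lattices, in the form that guarantees existence and uniqueness once a numerical/discriminant-form criterion is met. First I would record the invariants of the two lattices involved: $L$ has signature $(3,19)$ and is unimodular even, while $T$ has signature $(2,3)$ with discriminant form $q_T = -q_{(U\oplus A_5^{\oplus 3})}$, so that $A_T \cong A_{(U\oplus A_5^{\oplus 3})} = A_{A_5^{\oplus 3}} = (\mathbb{Z}/6)^{\oplus 3}$, whence $l(A_T) = 3$. I would then note that an even lattice $K$ with signature $(t_+-s_+,\, t_--s_-) = (3-2,\,19-3) = (1,16)$ and discriminant form $q_K = -q_T = q_{(U\oplus A_5^{\oplus 3})}$ is a candidate for the orthogonal complement $T^\perp$ in $L$, since a primitive embedding $T\hookrightarrow L$ with unimodular $L$ is equivalent (via Lemma \ref{4} and the surrounding discussion) to a choice of such a $K$ together with an isomorphism $q_T \oplus q_K \cong q_L = 0$.

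The existence half I would obtain by exhibiting such a complement concretely. Just as in Lemma \ref{15}, where $T$ was realized inside $H^2(X_7,\mathbb{Z})$ as the orthogonal complement of the primitive sublattice $U\oplus A_5^{\oplus 3}$, I can take $L = H^2(X_7,\mathbb{Z})$ and the embedding $T\hookrightarrow L$ already constructed there; this shows at least one primitive embedding exists. (Alternatively one checks Nikulin's existence criterion directly: for every prime $p$ the relevant $p$-adic conditions on the pair $(q_T, \mathrm{sign})$ are satisfied, which is routine since $\mathrm{rank}(L) - \mathrm{rank}(T) = 14$ is large relative to $l(A_T) = 3$.) So existence is not the obstacle.

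The real content is uniqueness up to $O(L)$. Here I would invoke Nikulin's uniqueness criterion: a primitive embedding of an even lattice $T$ of signature $(t_+,t_-)$ into an even unimodular lattice $L$ of signature $(l_+,l_-)$ is unique up to $O(L)$ provided that (a) $L$ is indefinite — true, signature $(3,19)$; (b) $\mathrm{rank}(L) > \mathrm{rank}(T)$, with strict inequality, and in fact $\mathrm{rank}(L) - \mathrm{rank}(T) \geq l(A_T) + 1$; here $22 - 5 = 17 \geq 3 + 1 = 4$; and (c) the orthogonal complement $K = T^\perp$, which has signature $(1,16)$ and discriminant form $-q_T$, satisfies the hypotheses of Lemma \ref{3}, namely $\mathrm{rank}(K) = 17 \geq l(A_K) + 2 = l(A_T) + 2 = 5$, so that $K$ is itself unique up to isometry in its genus and the map $O(K)\to O(q_K)$ is surjective. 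Given (c), any two primitive embeddings of $T$ have isometric complements, and the surjectivity of $O(K)\to O(q_K)$ together with Lemma \ref{4} lets one glue an isometry of $K$ and the identity on $T$ to an element of $O(L)$ carrying one embedding to the other.

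The main obstacle I expect is purely bookkeeping: one must verify carefully that $l(A_T) = 3$ (reading off from $A_5^{\oplus 3}$, whose discriminant group is $(\mathbb{Z}/6)^{\oplus 3}$, with each $\mathbb{Z}/6 = \mathbb{Z}/2 \oplus \mathbb{Z}/3$, so the length — the minimal number of generators — is $3$, not $6$), and then that the inequality $\mathrm{rank}(K) \geq l(A_K) + 2$ needed for Lemma \ref{3} indeed holds with room to spare. Once these numerical checks are in place, the proof is a direct citation of Lemma \ref{3} for the complement plus Nikulin's gluing argument; no delicate case analysis over primes is required because the corank is so large. I would therefore structure the write-up as: (1) compute invariants of $T$ and of the prospective complement $K$; (2) cite existence (or point to Lemma \ref{15}); (3) apply Lemma \ref{3} to see $K$ is unique in its genus with $O(K)\twoheadrightarrow O(q_K)$; (4) conclude uniqueness of the embedding up to $O(L)$ via the correspondence of Section 2.
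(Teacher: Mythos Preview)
Your proposal is correct and follows essentially the same route as the paper: both arguments apply Lemma~\ref{3} to the orthogonal complement $K=T^{\perp}$ (signature $(1,16)$, rank $17\geq l(A_T)+2=5$) to obtain uniqueness of $K$ in its genus and surjectivity of $O(K)\to O(q_K)$, then glue an isometry of the complements with an isometry $T_1\to T_2$ via the correspondence of Lemma~\ref{4} to produce the required element of $O(L)$. The paper is slightly more explicit---it identifies $K$ concretely as $U\oplus A_5^{\oplus 3}$ and writes out the commutative square relating $A_{T_i}$ and $A_{S_i}$---but the substance is the same; your phrase ``the identity on $T$'' should really be ``the given isometry $f:T_1\to T_2$'', though this is a minor slip in wording rather than in the argument.
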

\begin{proof}
Let $T_{i}$ ($i=1,2$) be a primitive sublattice of $L$ which is isomorphic to $T$,
and let $S_{i}=T_{i}^{\bot}$.
By Lemma \ref{3}, $S_{i}$ is isomorphic to $U\oplus A_{5}^{\oplus 3}$.
Since $T_{1}\cong T_{2}$,
there exists an isomorphism $f: T_{1}\to T_{2}$.
$f$ induces the map $\overline{f}:A_{T_{1}}\to A_{T_{2}}$.
We define a map $\phi :A_{S_{1}}\to A_{S_{2}}$ as follows:
$$
 \begin{CD}
   A_{T_{1}} @>{\overline{f}}>>A_{T_{2}}\\
  @V{r_{T_{1},S_{1}}}VV @V{r_{T_{1},S_{1}}}VV \\
    A_{S_{1}} @>{\phi}>>A_{S_{2}}
 \end{CD}
$$
By Lemma \ref{3}, there exists $g:S_{1} \to S_{2}$ such that $\overline{g}=\phi$.
It is obvious $$(g,f):S_{1}\oplus T_{1}\to S_{2}\oplus T_{2}$$ extends to an isomorphism from $S_{1}^{*}\oplus T_{1}^{*}$ to $S_{2}^{*}\oplus T_{2}^{*}$.
Since $(g,f)$ preserves $L$, $(g,f)|L$ is an isometry of $L$, and the image of $T_{1}$ by $(g,f)$ is $T_{2}$.
Hence we conclude the consequence of Lemma \ref{16}.
\end{proof}
We define a period domain $\mathcal{B}$ as follows:
$$\mathcal{B}:=\{ \omega \in \mathbb{P}(T\otimes \mathbb{C}) | \hspace{+,5em} \langle\omega , \omega \rangle=0, \langle\omega ,\overline{\omega} \rangle>0 \} .$$
Let $X$ be a K3 surface of which a nowhere vanishing $2$-form corresponds to $\omega$.
If there exists an element $\delta \in T$ such that $\langle \delta, \omega \rangle =0$,
$\delta$ is an element of $S_{X}$.
Then $T_{X}$ is a proper subset of $T$.
This is not a general case.
To exclude this case, we define $\mathcal{H_{\delta}} \; \mbox{and} \; \mathcal{H}$ as follows:
$$\mathcal{H_{\delta}}:=\{ \omega \in \mathcal{B} | \hspace{+,5em} \langle \omega, \delta\rangle=0\} \; 
\mbox{for}\; \delta \in T, $$
$$\mathcal{H}:=\cup\mathcal{H_{\delta}}.$$
\begin{thm}\label{17}
Let $(X, \alpha_{X})$ be a marked $K3$ surface and $\omega_{X}$ be a nowhere vanishing holomorphic $2$-form of $X$.
Then the following conditions are equivalent.
\\
$(i)$ The condition $(1)$ of Lemma $\ref{13}$,
\\
$(ii)$ The condition $(2)$ of Lemma $\ref{13}$,
\\
$(iii)$
$\alpha_{X}(\omega_{X}) \in \mathcal{B} \hspace{-,1em}\setminus \hspace{-,2em}\mathcal{H}$.
\end{thm}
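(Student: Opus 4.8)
The plan is to establish a cycle of implications $(i)\Rightarrow(ii)\Rightarrow(iii)\Rightarrow(i)$, since Lemma \ref{13} already gives $(i)\Leftrightarrow(ii)$ (or, more economically, I can prove $(ii)\Rightarrow(iii)$ and $(iii)\Rightarrow(ii)$ directly and invoke Lemma \ref{13} for the rest). The implication $(i)\Leftrightarrow(ii)$ is precisely Lemma \ref{13}, so nothing new is needed there; the real content is the equivalence with the period condition $(iii)$.

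For $(ii)\Rightarrow(iii)$: assume $f:X\to\mathbb{P}^1$ is an elliptic fibration with three fibres of type $I_6$, all other fibres $I_1$ or $II$, and $r(f)=n(f)=1$. By Corollary \ref{14} we have $NS(X)\cong U\oplus A_5^{\oplus 3}$, so $T_X=NS(X)^{\bot}$ is a primitive sublattice of $H^2(X,\mathbb{Z})$ of signature $(2,3)$, and by Lemma \ref{4} (applied with $L=H^2(X,\mathbb{Z})$, $S=NS(X)$, $T=T_X$) its discriminant form satisfies $q_{T_X}=-q_{NS(X)}=-q_{(U\oplus A_5^{\oplus 3})}$. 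Thus $T_X$ is isometric to the lattice $T$ of Lemma \ref{15}, and by Lemma \ref{16} there is an isometry of $H^2(X,\mathbb{Z})$ with $L$ carrying $T_X$ onto the fixed primitive copy $T\subset L$; composing $\alpha_X$ with this isometry (which does not change the equivalence class of the marked surface in a way that matters for the statement) we may assume $\alpha_X(T_X)=T$. Then $\alpha_X(\omega_X)$ lies in $T\otimes\mathbb{C}$ and, by the Riemann conditions, in $\mathcal{B}$. Moreover $\alpha_X(\omega_X)$ cannot lie on any $\mathcal{H}_\delta$: if $\langle\alpha_X(\omega_X),\delta\rangle=0$ for some $\delta\in T$, then $\delta\in NS(X)$ after applying $\alpha_X^{-1}$, contradicting $T_X=NS(X)^{\bot}$ being \emph{all} of $T$ (equivalently, $\delta$ would lie in $T_X\cap NS(X)=0$). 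Hence $\alpha_X(\omega_X)\in\mathcal{B}\setminus\mathcal{H}$.

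For $(iii)\Rightarrow(ii)$: suppose $\alpha_X(\omega_X)\in\mathcal{B}\setminus\mathcal{H}$. Since $\alpha_X(\omega_X)\in T\otimes\mathbb{C}$ with $\langle\alpha_X(\omega_X),\delta\rangle\neq 0$ for every nonzero $\delta\in T$, the transcendental lattice $T_X$ — characterized as the smallest primitive sublattice of $H^2(X,\mathbb{Z})$ whose complexification contains $\omega_X$ — satisfies $\alpha_X(T_X)=T$, so $NS(X)=\alpha_X^{-1}(T^{\bot})\cong U\oplus A_5^{\oplus 3}$ by Lemma \ref{3}. Now I use Lemma \ref{5} and the theory of the Kähler cone: a primitive isotropic vector $E\in NS(X)$ of the hyperbolic summand that is nef defines an elliptic fibration $\Phi:X\to\mathbb{P}^1$, and the orthogonal $A_5$-summands, being generated by $(-2)$-roots, supply reducible fibres; one checks via Lemma \ref{9}(i) that with $NS(X)\cong U\oplus A_5^{\oplus 3}$ and $r(\Phi)=0$ the only possibility for the sum $\sum(m_v-1)$ forces exactly three fibres with root lattices $A_5$ (type $I_6$), all remaining fibres irreducible, hence $I_1$ or $II$; and Lemma \ref{9}(ii) together with $|\det T_X|=|\det(U\oplus A_5^{\oplus 3})|=6^3/n(\Phi)^2$ combined with the primitivity/unimodular-gluing computation forces $n(\Phi)=1$, whence also $r(\Phi)=0$ is consistent. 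This is essentially the reverse of the computation in Lemma \ref{13} and Corollary \ref{14}. Choosing the nef class $E$ appropriately (using that $W(X)$ acts transitively on chambers and the standard fact that some Weyl-translate of a given primitive isotropic class is nef) yields the fibration $f$ required in $(ii)$.

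The main obstacle I expect is the $(iii)\Rightarrow(ii)$ direction: going from the abstract lattice isomorphism $NS(X)\cong U\oplus A_5^{\oplus 3}$ to an \emph{actual} elliptic fibration with the prescribed singular fibre configuration requires care with the Kähler/positive cone and with effectivity (Lemma \ref{5}), since a priori the root sublattices need not be realized by fibre components rather than by, say, a sum of a $(-2)$-curve and something else — one must argue that after applying a suitable element of $W(X)$ the $U$-summand is spanned by a nef fibre class and a section, and that the $A_5$-summands then land inside fibres. Controlling $n(f)$ and $r(f)$ via the determinant formula Lemma \ref{9}(ii) is the other delicate point, since it requires knowing that the three $I_6$ fibres are the \emph{only} reducible ones, which again comes down to the rank count $\rho(X)=20$ in Lemma \ref{9}(i). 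I would handle these by mimicking Naruki's argument as cited in the proof of Lemma \ref{13}, reading the configuration backwards from the lattice data.
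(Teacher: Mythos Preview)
Your proposal is correct and follows essentially the same route as the paper: Lemma \ref{13} for $(i)\Leftrightarrow(ii)$; Corollary \ref{14} together with Lemmas \ref{3}, \ref{15}, \ref{16} for $(ii)\Rightarrow(iii)$; and for $(iii)\Rightarrow(ii)$ the paper does exactly what you anticipate in your final paragraph---apply an element of $W(X)$ to move the isotropic class $E$ into $\overline{D}(X)$, use Lemma \ref{5} to make the $(-2)$-classes $\Theta_i^k$ effective, observe that $\sum_i \sigma(\Theta_i^k)\in |E'|$ gives three singular fibres with at least six components each, and then invoke formulas (4.1) and (4.2) to force $m_k=6$, $r=0$, $n=1$. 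One slip to fix: in your last paragraph $\rho(X)=17$, not $20$, since $\mathrm{rank}(U\oplus A_5^{\oplus 3})=2+15=17$; the rank count $17=r+2+\sum(m_v-1)\geq 2+15$ is what forces equality.
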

\begin{proof}
(i)$\Leftrightarrow$(ii): 
This is Lemma \ref{13}.
\\
(ii)$\Rightarrow$(iii):
This is an immediate consequence of Corollary \ref{14} , lemmas \ref{3}, \ref{15} and \ref{16}.
\\
(iii)$\Rightarrow$(ii):
By $\alpha_{X}(\omega_{X}) \in \mathcal{B} \hspace{-,1em} \setminus \hspace{-,2em} \mathcal{H}$,
we have $NS(X)=U\oplus A_{5}^{\oplus 3}$ and $T_{X}=T$.
Now, we denote by $E$ and $F$ the basis of the hyperbolic lattice $U$ and by $\Theta_{i}^{k}$ $(i=1,...,5  ;k=1,2,3)$ the basis of $k$-th component of $A_{5}^{\oplus 3}$,
and define $\Theta_{0}^{k}$ as $E-{\sum_{i=1}^{5} \Theta_{i}^{k}}$.
There exists $\sigma \in W(X)$ such that $ E^{'}=\sigma (E)\in \overline{D}(X)$ and the linear system $|E^{'}|$  contains  an elliptic curve.
By lemma 3.1, $\sigma (\Theta_{i}^{k})$ is represented by an effective divisor $D_{i}^{k}$.
By Bertini's theorem, we obtain an elliptic fibration  $\phi_{|E^{'}|}:X\to \mathbb{P}^{1}$.
It is obvious that $F_{k}:={\sum_{i=1}^{5} D_{i}^{k}}\in |E^{'}|$ is a singular fibre.
Since the number of the components of $F_{k}$ is larger than six, the type of $F_{k}$ is either $I_{n}$ or $I_{n}^{*}$  $(n\geq 6)$.
Let $m_{k}^{1}$ be the number of irreducible component of $F_{k}$.
We deduce from the formula (4.1) that $m_{1}+m_{2}+m_{3} \leq 18$.
Therefore,  $m_{k}=6$ and $r(\phi_{|E^{'}|})=0$.
By the formula (4.2), we deduced that the condition (ii).
\end{proof}
\begin{thm}\label{18}
Let $\omega_{X_{d}}$ $(d=3,4,7)$  be a nowhere vanishing holomorphic $2$-forms of $X_{d}$.
\\
Then $\omega_{X_{d}} \in \mathcal{H} \subset \mathcal{B}$.
\end{thm}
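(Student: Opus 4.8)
The plan is to reduce the assertion to a purely lattice-theoretic statement about $NS(X_{d})$ and then verify it case by case, the three surfaces appearing as ``boundary'' points of $\mathcal{B}\setminus\mathcal{H}$.

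First I would establish the reduction: it suffices to show that for each $d\in\{3,4,7\}$ the N\'eron--Severi lattice $NS(X_{d})$ contains a primitive sublattice isomorphic to $U\oplus A_{5}^{\oplus 3}$. Indeed, suppose this is known, and put $S_{0}=T^{\perp}\subset L$, so $S_{0}\cong U\oplus A_{5}^{\oplus 3}$ by Lemma \ref{15}. Given a primitive $S'\subset NS(X_{d})$ with $S'\cong U\oplus A_{5}^{\oplus 3}$, note $S'$ is primitive in $H^{2}(X_{d},\mathbb{Z})$ because $NS(X_{d})=T_{X_{d}}^{\perp}$ is; by the uniqueness statement of Lemma \ref{16}, applied to $S_{0}=T^{\perp}$ (the primitive embedding of $T$ into $L$ is unique up to $O(L)$ if and only if that of $S_{0}$ is), we may choose the marking $\alpha_{X_{d}}$ so that $\alpha_{X_{d}}(S')=S_{0}$. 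Then $\alpha_{X_{d}}(T_{X_{d}})=\alpha_{X_{d}}(NS(X_{d}))^{\perp}\subseteq S_{0}^{\perp}=T$, hence $\alpha_{X_{d}}(\omega_{X_{d}})\in T\otimes\mathbb{C}$, and since $\langle\omega_{X_{d}},\omega_{X_{d}}\rangle=0$ and $\langle\omega_{X_{d}},\overline{\omega_{X_{d}}}\rangle>0$ we get $\alpha_{X_{d}}(\omega_{X_{d}})\in\mathcal{B}$. Finally $\mathrm{rank}\,T_{X_{d}}=2<5=\mathrm{rank}\,T$, so the orthogonal complement of $\alpha_{X_{d}}(T_{X_{d}})$ in $T$ contains a nonzero vector $\delta$; then $\langle\alpha_{X_{d}}(\omega_{X_{d}}),\delta\rangle=0$, so $\alpha_{X_{d}}(\omega_{X_{d}})\in\mathcal{H}_{\delta}\subset\mathcal{H}$.

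For $d=7$ the lattice input is immediate: in Lemma \ref{15} the lattice $T$ is by construction the orthogonal complement in $H^{2}(X_{7},\mathbb{Z})$ of a sublattice $U\oplus A_{5}^{\oplus 3}$ that was shown there to be primitive in $NS(X_{7})$. For $d=3$ and $d=4$, Proposition \ref{12} tells us that $NS(X_{d})$ contains the trivial lattice $U\oplus E_{6}^{\oplus 3}$ (resp. $U\oplus D_{6}^{\oplus 3}$) with finite index, and is the overlattice of it obtained by adjoining the Mordell--Weil torsion $MW\cong\mathbb{Z}/3$ (resp. $(\mathbb{Z}/2)^{2}$). Inside $E_{6}$ (resp. $D_{6}$) take the sublattice $A_{5}$ spanned by an $A_{5}$-subdiagram of the Dynkin diagram; being spanned by part of a root basis it is primitive, so $U\oplus A_{5}^{\oplus 3}$ is primitive in $U\oplus E_{6}^{\oplus 3}$ (resp. $U\oplus D_{6}^{\oplus 3}$). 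One then has to check that, for a suitable choice of these subdiagrams, $U\oplus A_{5}^{\oplus 3}$ is primitive in the whole of $NS(X_{d})$; equivalently, that no nonzero glue class coming from a torsion section lies in $\bigoplus (A_{5}\otimes\mathbb{Q})$ modulo the root lattices. For $d=4$ I would do this by tracking which component of each $I_{2}^{*}$ fibre a torsion section meets: every nonzero torsion section meets, in exactly one of the three $I_{2}^{*}$ fibres, the component whose class in $D_{6}^{*}/D_{6}$ is the nonzero class that is never represented in $A_{5}\otimes\mathbb{Q}$ (the ``vector'' class), so the corresponding glue class can never be of the forbidden form and primitivity follows.

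The hard part will be $d=3$. Here $E_{6}^{*}/E_{6}\cong\mathbb{Z}/3$ is cyclic, and a direct computation (e.g.\ using $A_{5}\oplus A_{1}\subset E_{6}$) shows that its generator is always represented by a vector of $A_{5}\otimes\mathbb{Q}$; consequently no choice of $A_{5}$-subdiagrams makes $U\oplus A_{5}^{\oplus 3}$ primitive in $NS(X_{3})$, so the required primitive copy of $U\oplus A_{5}^{\oplus 3}$ must be produced by another route. I would attempt this either geometrically --- by exhibiting an explicit configuration of smooth rational curves on $X_{3}$, using the model $X_{3}\cong\widetilde{(E_{3}\times E_{3})/\sigma}$ of \S5.1 or the branch sextic $C_{3}$ of Proposition \ref{12}, whose classes span such a sublattice --- or lattice-theoretically, by constructing a rank-$3$ negative definite lattice $K$ whose discriminant form is the one forced by $q_{NS(X_{3})}=-q_{T_{X_{3}}}$ and $q_{U\oplus A_{5}^{\oplus 3}}$ (one checks that its Brown invariant is compatible with signature $(0,3)$, so it is unobstructed) and verifying that $NS(X_{3})$ is the overlattice of $(U\oplus A_{5}^{\oplus 3})\oplus K$ determined by the corresponding gluing. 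I expect this primitivity check for $d=3$ to be the main technical obstacle of the proof.
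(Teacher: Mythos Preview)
Your overall strategy and your treatment of $d=7$ coincide with the paper's: both reduce to exhibiting $U\oplus A_{5}^{\oplus 3}$ primitively inside $NS(X_{d})$ (equivalently $T_{X_{d}}\hookrightarrow T$), and for $d=7$ both invoke the construction from Lemma~\ref{15}. For $d=3,4$ the paper does exactly the ``obvious'' move you describe first: it takes the $A_{5}$ subdiagram of $E_{6}$ (resp.\ $D_{6}$), asserts that ``in the same way as Lemma~\ref{15}'' this yields a primitive embedding $U\oplus A_{5}^{\oplus 3}\hookrightarrow NS(X_{d})$, and then passes to orthogonal complements in $L$. No verification of primitivity in the overlattice $NS(X_{d})\supsetneq U\oplus E_{6}^{\oplus 3}$ (resp.\ $U\oplus D_{6}^{\oplus 3}$) is given.

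Your proposal is therefore more scrupulous than the paper's own argument, and your concern about $d=3$ is well founded. The nonzero classes of $E_{6}^{*}/E_{6}\cong\mathbb{Z}/3$ are represented by $\pm 2\omega_{1}^{A_{5}}\in A_{5}\otimes\mathbb{Q}$ (one checks $\langle 2\omega_{1}^{A_{5}},\alpha_{6}\rangle\in\mathbb{Z}$ while $2\omega_{1}^{A_{5}}\notin A_{5}=E_{6}\cap(A_{5}\otimes\mathbb{Q})$), and the $3$-torsion section of the fibration with three $IV^{*}$ fibres glues through a class in $(\mathbb{Z}/3)^{3}$ with all three components nonzero. Translating that section by a suitable element of $U\oplus E_{6}^{\oplus 3}$ one obtains a class in $NS(X_{3})\cap\bigl((U\oplus A_{5}^{\oplus 3})\otimes\mathbb{Q}\bigr)$ which is not in $U\oplus A_{5}^{\oplus 3}$. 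So the specific embedding the paper writes down for $d=3$ is \emph{not} primitive in $NS(X_{3})$; some alternative along the lines you sketch (an explicit curve configuration on $X_{3}$, or a Nikulin-style construction of the rank-$3$ complement and the corresponding gluing) is genuinely needed to complete the case $d=3$. Your planned component-by-component check for $d=4$ is likewise a real addition, since the paper does not carry it out either.
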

\begin{proof}
It is enough to show that there exists  a primitive embedding:
$$T_{X_{d}} \subset T\:(d=4, 7).$$
In the case where $d=7$, this is an immediate consequence of the proof of Lemma \ref{15}.
In the same way, we can find a sublattice of $D_{6}$ (resp. $E_{6}$) isomorphic to $A_{5}$:
\begin{figure}[H]
 \centering
 \includegraphics{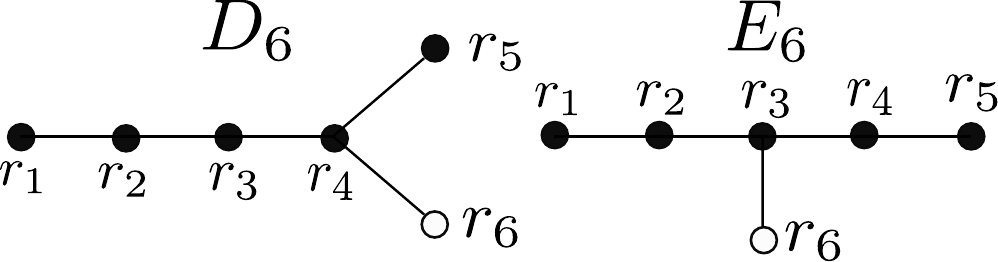}
 \caption{}
 \label{Dynkin}
\end{figure}%
\noindent
Here, the black circles consist the Dynkin diagram of type $A_{5}$.
Hence, in the same way of Lemma \ref{15}, we have primitive embeddings:
\begin{equation}
   \begin{split}
&U\oplus A_{5}^{\oplus 3} \hookrightarrow NS(X_{3}) \hookrightarrow L \\
&U\oplus A_{5}^{\oplus 3} \hookrightarrow NS(X_{4}) \hookrightarrow L
   \end{split}
\end{equation}
If we take the orthogonal complements of these lattices in $L$,
we have the desired primitive embeddings $T_{X_{d}} \subset T\: (d=3,4)$.
\end{proof}
\section{The automorphism group}
\begin{thm}
Let $X$ be a $K3$ surface which satisfies the conditions of theorem $\ref{17}$.
Then its automorphism group $Aut(X)$ is an infinite group.
\end{thm}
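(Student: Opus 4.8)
The plan is to follow the strategy of Shioda and Inose reproduced in Section~4: on $X$ we will produce an elliptic fibration $\Psi\colon X\to\mathbb{P}^1$ whose Mordell--Weil group has positive rank, so that $\Psi$ carries a section of infinite order; fibrewise translation by such a section extends, by minimality of $X$ (as for the inversion involution in Section~3.2), to an automorphism of $X$ of infinite order, and hence $Aut(X)$ is infinite. The input is Corollary~\ref{14}, which gives $NS(X)\cong U\oplus A_{5}^{\oplus 3}$, so $\rho(X)=17$, together with the elliptic fibration $f\colon X\to\mathbb{P}^1$ of Theorem~\ref{17}(ii): it has exactly three singular fibres of type $I_{6}$ (all other fibres being $I_{1}$ or $I\hspace{-,1em}I$), a section $g$, and $r(f)=n(f)=1$. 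Label the components of the $k$-th hexagon $I_{6}$ fibre by $\Theta_{0}^{k},\dots,\Theta_{5}^{k}$ (with $\Theta_{i}^{k}\cdot\Theta_{j}^{k}=1$ exactly when $i-j\equiv\pm1\bmod 6$), so arranged that the section $g$ meets each fibre in its identity component $\Theta_{0}^{k}$.

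The key geometric observation is that, being a section, $g$ meets each of the three $I_{6}$ fibres once, hence has valency three in $\bigcup_{k}F_{k}$; this is exactly what is needed to assemble an $\tilde{E}_{6}$. Concretely, set
\[
D:=3g+2(\Theta_{0}^{1}+\Theta_{0}^{2}+\Theta_{0}^{3})+(\Theta_{1}^{1}+\Theta_{1}^{2}+\Theta_{1}^{3}).
\]
This is an effective divisor; its dual graph is a central node $g$ with three arms $g-\Theta_{0}^{k}-\Theta_{1}^{k}$, i.e.\ the extended Dynkin diagram $\tilde{E}_{6}$, and the displayed multiplicities $3;2,2,2;1,1,1$ are precisely the Kodaira multiplicities of a fibre of type $I\hspace{-,1em}V^{*}$; one checks directly that $D^{2}=0$. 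By Lemma~\ref{7} there is a unique elliptic fibration $\Psi\colon X\to\mathbb{P}^1$ having $D$ as a (simple) singular fibre, and since $\Theta_{2}^{1}\cdot D=1$, the curve $\Theta_{2}^{1}$ is a section of $\Psi$. Thus the trivial sublattice $L_{\Psi}$ contains a copy of $U\oplus E_{6}$.

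It remains to see that $r(\Psi)\geq 1$. By Lemma~\ref{9}(i), $r(\Psi)=\rho(X)-\mathrm{rank}(L_{\Psi})=17-2-\sum_{v}(m_{v}-1)$, so it suffices to show that the reducible fibres of $\Psi$ contribute at most $14$ to $\sum_{v}(m_{v}-1)$; equivalently, that the sublattice of $NS(X)$ spanned by the $(-2)$-classes orthogonal to $[D]$ has rank at most $14$. Expressing $[D]$ in the standard basis of $U\oplus A_{5}^{\oplus 3}$ and computing $[D]^{\perp}$ (a rank-$16$ primitive sublattice), this is a finite linear-algebra verification: the $(-2)$-classes orthogonal to $[D]$ do not span the full rank-$15$ frame, the underlying reason being that the root system $E_{6}$ contributed by the fibre $D$ cannot be an orthogonal summand of a rank-$15$ root lattice embedded with finite index into the frame lattice of $\Psi$, since $E_{6}$ is not a sub-root-system of $A_{5}\oplus A_{5}\oplus A_{5}$. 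I expect this verification --- bounding the reducible fibres of the auxiliary fibration $\Psi$ --- to be the main obstacle; everything else is the Shioda--Inose machinery of Section~4. Granting it, $r(\Psi)\geq 1$, so by the isomorphism (\ref{eq:MW}) $MW(\Psi)\cong NS(X)/T_{\Psi}$ is infinite, $\Psi$ has a section $s$ of infinite order, translation $t_{s}$ by $s$ is an automorphism of $X$ with $t_{s}^{\,n}=t_{ns}$ of infinite order, and therefore $Aut(X)$ is infinite.
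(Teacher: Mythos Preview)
Your setup is exactly the paper's: the same divisor $D=3g+2\sum_k\Theta_0^k+\sum_k\Theta_1^k$ of type $I\hspace{-.1em}V^{*}$, the same auxiliary fibration $\Psi$ obtained from Lemma~\ref{7}, and the same reduction to showing $r(\Psi)\ge 1$. The gap is that you do not actually carry out this last step. You describe it as ``a finite linear-algebra verification'', offer a heuristic (``$E_6$ is not a sub-root-system of $A_5\oplus A_5\oplus A_5$''), and then explicitly write ``Granting it, \dots''. That is precisely the content of the theorem; without it you have only reconstructed the fibration, not proved infiniteness. Moreover, the heuristic is not the right mechanism: the $E_6$ coming from the $I\hspace{-.1em}V^{*}$ fibre sits inside $NS(X)=U\oplus A_5^{\oplus 3}$ through the hyperbolic summand (via $g$), not inside $A_5^{\oplus 3}$, and the question is not whether $E_6$ embeds in $A_5^{\oplus 3}$ but whether a rank-$15$ root lattice containing $E_6$ as an orthogonal summand can sit with finite index in the frame of $\Psi$. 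Your proposed bound on ``$(-2)$-classes orthogonal to $[D]$'' is also not quite the right object: many such classes are sections or multisections (e.g.\ $\Theta_2^k$, $\Theta_5^k$), not fibre components.

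The paper closes this gap by a concrete computation rather than an abstract root-system argument. First it observes that for each $k$ the curves $\Theta_3^k,\Theta_4^k$ are disjoint from $D$ and hence lie in further reducible fibres $f^{-1}(t_k)$ of $\Psi$. Then it takes the sublattice $S\subset NS(X)$ generated by $\Theta_l^k$ ($l=0,1,3,4$; $k=1,2,3$) together with the section $\Theta_2^1$, and computes $S^{\perp}$ explicitly: it is spanned by three vectors with Gram matrix $\mathrm{diag}(-6,-6,-6)$, hence contains no $(-2)$-roots. This forces every reducible fibre of $\Psi$ other than $D$ to be one of the three $f^{-1}(t_k)$, and restricts their types to $A_2$, $A_3$ or $D_4$. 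Finally, assuming $r(\Psi)=0$ leaves only two possible trivial lattices, and for each the ratio $d(NS(X))/d(L_\Psi)$ fails to be a square, contradicting Lemma~\ref{2}. That discriminant-square obstruction is the actual endgame; your sketch does not reach it.
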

\begin{proof}
The same argument as Shioda and Inose \cite{Shioda} can be applied.
Let $f:X \to \mathbb{P}^{1}$ be an elliptic K3 surface.
If $r(f)$ is larger than $1$,
by the equation (\ref{eq:MW}),
there are infinitely many sections of $f$.
Then the automorphism group of $X$ is an infinite group.
\\
So it is enough to show that there exists an elliptic fibration $f:X\to \mathbb{P}^{1}$ with $r(f) \geq 1$.
By proposition \ref{17}, $X$ has an elliptic fibration $\Phi$ which satisfies the condition (ii) of Lemma \ref{13}.
The elliptic fibration $\Phi$ has a global section $g$ and the three singular fibre of type $I_{6}$.
The components of a fibre of type $I_{6}$ is denoted by $\Theta^{k}_{l}$ ($k=1,2,3$: $l=0,1,...,5$) which satisfies the following conditions:
$$\langle \Theta^{k}_{l}, \Theta^{k}_{l+1} \rangle=1,$$
$$\langle \Theta^{k}_{0}, g \rangle =1.$$
Now, we define an effective divisor E and $E_{k}$ as follows:
$$E:=3g+2(\Theta_{0}^{1}+\Theta_{0}^{2}+\Theta_{0}^{3})+(\Theta_{1}^{1}+\Theta_{1}^{2}+\Theta_{1}^{3}),$$
$$E_{k}:=E-(\Theta_{3}^{k}+\Theta_{4}^{k}).$$
By Lemma \ref{8}, $X$ has an elliptic fibration $f:X\to \mathbb{P}^{1}$ with  a singular fibre $E$. The type of $E$ is $I\hspace{-,1em}V^{*}$.
By Lemma \ref{8}, $\Theta^{k}_{2}$ are global sections of $f$.
We consider the three divisors:
$$\Theta^{k}_{3}+\Theta^{k}_{4} \; (k=1,2,3).$$
These divisors do not intersect the fibre $E$ of $f$.
Hence, the image $f(\Theta^{k}_{3}+\Theta^{k}_{4})$ is a point $t_{k}$ of $\mathbb{P}$,
and $(-2)$-curves $\Theta^{k}_{3}$ and $\Theta^{k}_{4}$ are the components of a singular fibre $f^{-1}(t_{k})$.
\\
Now, let $S$ be a lattice generated by the following divisors:
$$\Theta^{k}_{l} \; (k=1,2,3 \; ,l=0,1,3,4)\; and\; \Theta^{1}_{2}.$$
Let the divisors $\Theta^{k}$ be $\Theta^{k}_{1}+2\Theta^{k}_{2}+\Theta^{k}_{3}-\Theta^{k}_{5}$.
By an easy calculation, it is shown that a lattice $S^{\bot}$ which is the orthogonal complement of $S$ in $NS(X)$ is generated by these three divisors:
$$A=2E+\Theta^{1},$$
$$\Theta^{2}\; and\;  \Theta^{3}.$$
Moreover, the lattice $S^{\bot}$ is a lattice with the Gram matrix
$$\left(
\begin{array}{ccc}
-6 & 0 & 0 \\
0  &-6 & 0 \\
0  & 0 & -6
\end{array}
\right).$$
\\[1ex]
Hence the elliptic fibration $f$ has the singular fibres $E$, $f^{-1}(t_{k})$, and others are $I_{1}$ or $I\hspace{-,1em}I$ because 
the lattice $S^{\bot}$ does not have $(-2)$-roots.
The types of singular fibres $f^{-1}(t_{k})$ are $A_{2}$, $A_{3}$ or $D_{4}$ for the same reason.
If $r(f)=0$, the trivial sublattice is isomorphic to $U\oplus E_{6}\oplus A_{3}^{\oplus 3}$ or 
$U\oplus A_{2}\oplus A_{3} \oplus D_{4}$.
Here,  
\begin{equation}
   \begin{split}
&\frac{d(NS(X))}{d(U\oplus E_{6}\oplus A_{3}^{3})} \\
&=\frac{d(U\oplus A_{5}^{3})}{d(U\oplus E_{6}\oplus A_{3}^{3})} \\
&=\frac{6^{3}}{3\cdot 4^{3}}
   \end{split}
\end{equation}
In the same way,
$$ \frac{d(NS(X))}{d(U\oplus A_{2}\oplus A_{3} \oplus D_{4})}=\frac{6^{3}}{3 \cdot 4^{2}}$$
These are not squares,
and contradict the formula of Lemma \ref{2}.
\end{proof}

\end{document}